\def\eqalign#1{\null\vcenter{\def\\{\cr}\openup\jot\m@th
  \ialign{\strut$\displaystyle{##}$\hfil&$\displaystyle{{}##}$\hfil
      \crcr#1\crcr}}\,}
\newcommand{\be}{\begin{equation}} 
\newcommand{\ee}{\end{equation}}
\newcommand{\beq}{\begin{eqnarray}}
\newcommand{\eeq}{\end{eqnarray}}
\newcommand{\bt}{\beta}
\newcommand{\et}{\end{theorem}}
\newcommand{\bl}{\begin{lemma}}
\newcommand{\el}{\end{lemma}}
\newcommand{\bc}{\begin{corollary}}
\newcommand{\ec}{\end{corollary}}
\newcommand{\ba}{\begin{array}}
\newcommand{\ea}{\end{array}}
\newcommand{\la}{\label}
\newcommand{\ci}{\cite}
\newcommand{\de}{\delta}
\newcommand{\De}{\Delta}
\newcommand{\al}{\alpha}
\newcommand{\ga}{\gamma}
\newcommand{\Ga}{\Gamma}
\newcommand{\si}{\sigma}
\newcommand{\Si}{\Sigma}
\newcommand{\om}{\omega}
\newcommand{\Om}{\Omega}
\newcommand{\lb}{\lambda}
\newcommand{\ze}{\zeta}
\newcommand{\ka}{\varkappa}
\newcommand{\ep}{\varepsilon }
\newcommand{\bi}{\bibitem}
\newfont{\msbm}{msbm10 scaled\magstep1}
\newfont{\msbms}{msbm7 scaled\magstep1} 
\newcommand{\bbr}{\mbox{$\mbox{\msbm R}$}}
\newcommand{\bbc}{\mbox{$\mbox{\msbm C}$}}
\newtheorem{theorem}{Theorem}[section]
\newtheorem{proposition}[theorem]{Proposition}
\theoremstyle{definition}
\theoremstyle{remark}
\newtheorem{remark}[theorem]{Remark}
\numberwithin{equation}{section}
\begin{document}
\def\wt{\widetilde}
\title[Gaussian weight with a jump]{Hankel determinant and orthogonal polynomials
for the Gaussian weight with a jump}
\author{A. Its}
\address{Department of Mathematical Sciences,
Indiana University -- Purdue University  Indianapolis,
Indianapolis, IN 46202-3216, USA}
\thanks{The first author was supported in part by NSF Grant \#DMS-0401009.}

\author{I. Krasovsky}
\address{Department of Mathematical Sciences,
Brunel University West London,
Uxbridge UB8 3PH, United Kingdom}

\thanks{The second author was supported in part by EPSRC Grant EP/E022928/1.}


\dedicatory{Dedicated to Percy Deift on the occasion of his 60th birthday.}

\begin{abstract}
We obtain asymptotics in $n$ for the $n$-dimensional Hankel determinant
whose symbol is the Gaussian multiplied by a step-like function. We use
Riemann-Hilbert analysis of the related system of orthogonal 
polynomials to obtain our results. 
\end{abstract}

\maketitle

\section{Introduction}
Consider the $n$-dimensional Hankel determinant
\be
D_n(\beta)=\det\left(\int_{-\infty}^\infty x^{j+k}
  w(x)dx\right)_{j,k=0}^{n-1}=
{1\over n!}\int_{-\infty}^\infty\cdots\int_{-\infty}^\infty
\prod_{i<j}{(x_i-x_j)^2}\prod_{k=1}^n{w(x_k)dx_k}
\ee
with discontinuous symbol
\be\la{w}
w(x)=e^{-x^2}\begin{cases}e^{i\beta\pi},& x < \mu_0\cr
e^{-i\beta\pi},& x\ge\mu_0\end{cases},\qquad \Re\bt\in (-1/2,1/2). 
\ee
We are interested in the asymptotics of $D_n(\beta)$ for large $n$.
Note that if the jump in $w(x)$ is absent, i.e. $\beta=0$,
$D_n(0)$ is a Selberg integral with the following explicit 
representation and the asymptotics (cf. \cite{Warc}):
\be\label{D0}
D_n(0)=(2\pi)^{n/2}2^{-n^2/2}\prod_{j=1}^{n-1}{j!}=
(2\pi)^n(n/2)^{n^2/2}n^{-1/12}e^{-(3/4)n^2+\ze'(-1)}(1+O({1/n})),
\ee
where $\ze'(x)$ is the derivative of Riemann's zeta-function.

The jump discontinuity similar to the one in (\ref{w}) and root-type singularities 
of the form 
$|x~-~\mu|^{\al}$ are collectively known as Fisher-Hartwig
singularities. 
 
Investigation of asymptotic behaviour of general {\it Toeplitz}
determinants whose symbol has such singularities was initiated by 
Lenard \cite{L} and 
Fisher and Hartwig \cite{FH}, who conjectured asymptotic
formulas on the basis of explicitly known examples and Szeg\H o's theorem 
for smooth symbols. (Such determinants
are related, for example, to the Ising model and to random walks on a lattice.)
A proof of part of the 
Fisher-Hartwig conjectures was obtained by Widom in 1973 \cite{W}.
Since then many workers have contributed to proof/disproof of these
conjectures (see \cite{Ehr} for a review) using mostly methods from operator theory. 

In contrast, the question of asymptotics for Hankel determinants, especially
when the symbol has unbounded support, remains to a large extent
open. Studies for in-a-sense regular perturbations of Hankel symbols
were carried out in \cite{J, BCW, KVA, BC}.
The investigation of the ``singular'' Fisher-Hartwig case for symbols on $\bbr$ 
started with the particular situation of the 
Gaussian perturbed by root-type singularities as this
example is important for random matrix theory; namely, for symbols
\be
e^{-x^2}\prod_{j=1}^m|x-\mu_j|^{2\al_j}.\la{root}
\ee

It turns out that the proper singular case corresponds to $\mu_j$
being inside the support of the equilibrium measure, namely,
$\mu_j\in [-\sqrt{2n},\sqrt{2n}]$. 
The asymptotics of the Hankel determinant with the symbol
(\ref{root}) and $\mu_j=\lb_j\sqrt{2n}$, $\lb_j\in (-1,1)$ were found in 
\cite{BH,FF,G} for integer $\al_j$, and in \cite{Kr2}, for the general
case $\Re\al_j>-1/2$. 

In the present paper we consider another (jump) type of Fisher-Hartwig
singularity on the Gaussian background. We prove

\begin{theorem}\la{Th1} 
Fix $\lb_0\in(-1,1)$, $\Re\beta\in(-1/4,1/4)$.
Let $\mu_0=\lb_0\sqrt{2n}$.
Then, as $n\to\infty$,
\be\eqalign{
\frac{D_n(\beta)}{D_n(0)}=
G(1+\beta)G(1-\beta)(1-\lb_0^2)^{-3\beta^2/2}
(8n)^{-\beta^2}\times\\
\exp\left\{2in\beta\left(\arcsin\lb_0+
\lb_0\sqrt{1-\lb_0^2}\right)\right\}\left[1+O\left({\ln n\over n^{1-4|\Re\beta|}}
\right)\right]
,}\label{as}
\ee
where  
$G(z)$ is Barnes' $G$-function.
\end{theorem}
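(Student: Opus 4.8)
The plan is to reduce the asymptotics of the ratio $D_n(\beta)/D_n(0)$ to the Deift--Zhou steepest-descent analysis of the $2\times 2$ Riemann--Hilbert problem for the orthogonal polynomials associated with $w$. First I would invoke the Fokas--Its--Kitaev characterization: the matrix $Y(z)$, analytic in $\bbc\setminus\bbr$, with jump $Y_+(x)=Y_-(x)\left(\begin{smallmatrix}1 & w(x)\\ 0 & 1\end{smallmatrix}\right)$ and normalization $Y(z)=(I+O(1/z))z^{n\si_3}$, encodes the monic orthogonal polynomials and their norms $h_j$, so that $D_n(\beta)=\prod_{j=0}^{n-1}h_j$. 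Rather than track each $h_j$, I would differentiate in $\beta$: since $\partial_\beta\log w(x)=-i\pi\,\mathrm{sgn}(x-\mu_0)$, a short computation (the $\beta$-derivatives of the polynomials are orthogonal to themselves and drop out) gives the differential identity $\partial_\beta\log D_n=\int_{-\infty}^\infty \partial_\beta\log w(x)\,K_n(x,x)\,dx$, where $K_n(x,x)$ is the Christoffel--Darboux kernel on the diagonal, expressible through $Y$. The asymptotics then follow by evaluating this identity and integrating in $\beta$ from $0$.

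The core of the argument is the uniform asymptotic analysis of $Y$. I would rescale $x=\sqrt{2n}\,z$ so that the external field has equilibrium measure the semicircle $\psi(s)=\frac{2}{\pi}\sqrt{1-s^2}$ on $[-1,1]$, with the jump now at the fixed interior point $\lb_0$. The standard sequence of transformations follows: $Y\mapsto T$ using the $g$-function to normalize at infinity, and $T\mapsto S$ by opening lenses along $[-1,1]$ so that the off-axis jumps decay exponentially. Already at this stage one sees the origin of the oscillatory phase, since $\int_0^{\lb_0}\psi\,ds=\frac{1}{\pi}\bigl(\arcsin\lb_0+\lb_0\sqrt{1-\lb_0^2}\bigr)$; indeed the bulk contribution $K_n(x,x)\approx n\psi(x)$ to the differential identity reproduces the leading term $2in\bigl(\arcsin\lb_0+\lb_0\sqrt{1-\lb_0^2}\bigr)$, which integrates to the exponential in (\ref{as}).

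Next I would assemble the global parametrix out of the Szeg\H o function for the jump factor, the Airy parametrices at the soft edges $\pm1$, and --- the essential new ingredient --- a local parametrix at $\lb_0$ built from confluent hypergeometric functions, which is the model problem for a pure Fisher--Hartwig jump of strength $\beta$. Matching this parametrix to the global one on the boundary of a fixed disk around $\lb_0$ leaves a mismatch of size $O(n^{4|\Re\beta|-1}\ln n)$, the subleading $\zeta^{\pm2\beta}$ terms of the model being responsible for the exponent $4|\Re\beta|$; this is precisely where the hypothesis $\Re\beta\in(-1/4,1/4)$ enters, guaranteeing the error tends to zero. A small-norm argument then yields $R=S\,(\text{parametrix})^{-1}=I+O(n^{4|\Re\beta|-1}\ln n)$ uniformly, which is the source of the error term in (\ref{as}).

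Finally I would substitute the resulting asymptotics of $Y$ near $\mu_0$ into the differential identity. The constant term of the confluent hypergeometric parametrix contributes $\partial_\beta\log\bigl[G(1+\beta)G(1-\beta)\bigr]$, the Szeg\H o function evaluated at $\lb_0$ contributes the $\beta$-derivative of $(1-\lb_0^2)^{-3\beta^2/2}$, and the rescaling together with the local density at $\lb_0$ produces the explicit power $(8n)^{-\beta^2}$; integrating in $\beta$ from $0$ and using $D_n(0)$ from (\ref{D0}) yields (\ref{as}). I expect the main obstacle to be the construction and matching of the confluent hypergeometric parametrix and the careful bookkeeping of its subleading coefficients, since these simultaneously fix the Barnes $G$-function constant, pin down the precise power of $n$, and govern the borderline error estimate.
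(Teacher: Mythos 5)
Your overall architecture coincides with the paper's: the Fokas--Its--Kitaev RHP, rescaling and $g$-function, lens opening, Szeg\H o-function outer parametrix, Airy parametrices at $\pm 1$, a confluent hypergeometric parametrix at $\lambda_0$, a small-norm argument for $R$, and a differential identity in $\beta$ integrated from $0$. But there is a genuine gap at the central step. Your identity $\partial_\beta\log D_n=\int\partial_\beta\log w(x)\,K_n(x,x)\,dx$ is correct, but it is a \emph{global} identity: to extract the constant term of the asymptotics you must integrate the kernel over the whole line to accuracy $o(1)$, uniformly in $\beta$, including the corrections of absolute size $O(n^{2|\Re\beta|})$ in the local zone around $\lambda_0$, the Airy edge zones, and the oscillatory bulk corrections. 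You assert that ``the constant term of the confluent hypergeometric parametrix contributes $\partial_\beta\log[G(1+\beta)G(1-\beta)]$,'' but you give no mechanism, and this integral evaluation is precisely the hardest part of your plan. The paper avoids it entirely: starting from $D_n(\beta)=\prod_{j=0}^{n-1}\kappa_j^{-2}$ and the identity (\ref{D2}) from \cite{Kr2}, it applies Christoffel--Darboux and integrates by parts \emph{across the jump}, so that the discontinuity at $\mu_0$ converts everything into boundary terms; the result, Proposition \ref{Prop1}, is a \emph{local} identity involving only $Y_{11}(\mu_0)$, $Y_{21}(\mu_0)$, their $\beta$-derivatives, and the coefficients $\kappa_n,\beta_n,\gamma_n$ read off at infinity. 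The Gamma-ratio derivative $\bigl(\ln\Gamma(\beta)/\Gamma(-\beta)\bigr)'_\beta$ then appears directly from the behavior (\ref{psi0}) of the confluent hypergeometric function at $\zeta=0$ and integrates to the Barnes $G$ combination via (\ref{C}). Without this localization, or an equivalently careful evaluation of the integrated kernel, your outline does not yet constitute a proof.

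Two further technical claims are off. First, the mismatch of the local parametrix on $\partial U_{\lambda_0}$ is $O(n^{2|\Re\beta|-1})$, not $O(n^{4|\Re\beta|-1}\ln n)$: note that $|e^{\pm 2n\phi_+(\lambda_0)}|=1$ because $\phi_+(\lambda_0)$ is purely imaginary, so the $\zeta^{\pm 2\beta}$ terms in $\Delta_1$ of (\ref{alb}) contribute $n^{2|\Re\beta|}$, not $n^{4|\Re\beta|}$. Consequently the small-norm analysis works in the whole strip $|\Re\beta|<1/2$ (which is how the paper obtains Theorem \ref{Th2} there); the exponent $4|\Re\beta|$ and the logarithm arise only later, from bilinear combinations such as $Y_{11}Y'_{21,\beta}-Y'_{11,\beta}Y_{21}$ in the differential identity, where products of two entries each carrying $n^{\pm 2\Re\beta}$ factors appear and $\beta$-differentiation of $n^{\pm\beta}$-type terms produces $\ln n$ --- that, not the parametrix matching, is where $\Re\beta\in(-1/4,1/4)$ enters. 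Second, for $\Re\beta\neq 0$ the weight is complex, so some $D_k(\beta)$ may vanish and the orthogonal polynomials (hence your Christoffel--Darboux representation of the kernel) may fail to exist at exceptional $\beta$; the paper devotes a specific argument to this (the exceptional set $\Omega$, uniformity of the error term in $\beta$, and a continuity argument showing $D_n(\beta)\neq 0$ for large $n$ so that the integrated identity extends to all $\beta$ in the strip), while your proposal is silent on it.
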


With increasing effort, one can compute higher order asymptotics. The restriction $\Re\beta\in(-1/4,1/4)$ only makes
it easier to handle some technicalities.
One could use our methods to consider the general case $\Re\beta\in(-1/2,1/2]$
as well.

Note that 
\be\la{C}
G(1+\beta)G(1-\beta)=\left({\Gamma(\bt)\over\Gamma(-\bt)}
\right)^\bt \exp\left(-\bt^2-\int_0^\bt\ln {\Gamma(\bt)\over\Gamma(-\bt)}
d\bt\right),
\ee
where $\Gamma(z)$ is Euler's $\Gamma$-function.

It is interesting to compare Theorem \ref{Th1} with the result of \cite{B,BS}
on a Toeplitz determinant for a symbol with a jump. We have the same
combination of $G$-functions in both cases.

Of independent interest is the investigation of the system of polynomials 
orthonormal w.r.t. the weight (\ref{w}). Consider the system 
$p_k(x)=\ka_k x^k+\cdots$, $\ka_k\neq 0$,
$k=0,1,\dots$, satisfying 
\be\la{OP}
\int_{-\infty}^\infty p_j(x)p_k(x)w(x)dx=\de_{j\,k},\qquad k=0,1,\dots
\ee
This orthogonality relation is equivalent to
\be\la{OPp}
\int_{-\infty}^\infty p_k(x)x^j w(x)dx={\de_{jk}\over\ka_k},\qquad 
j=0,1,\dots,k,\quad k=0,1,\dots
\ee
We also consider monic polynomials $\hat p_k(x)=p_k(x)/\ka_k=x^k+\cdots$, 
which can
be defined by the conditions
\be\la{OPhp}
\int_{-\infty}^\infty\hat p_k(x)x^j w(x)dx=h_k\de_{jk},\quad h_k\neq 0,\qquad
j=0,1,\dots,k,\quad k=0,1,\dots
\ee
Obviously, $\ka_k^{-2}=h_k$.

The existence of such a system of orthogonal polynomials with nonzero 
normalization coefficients $h_k$ is well known for real weights. 
If $\Re\bt\neq 0$,
we will show existence of these polynomials for large enough degree.

As any system of orthogonal polynomials,  $\hat p_n(x)$ satisfy a three-term 
recurrence relation:
\be\la{rr}
x\hat p_n(x)=\hat p_{n+1}(z)+A_n \hat p_n(x)+B_{n} \hat p_{n-1}(x),\qquad
n=0,1,\dots,\qquad \hat p_{-1}\equiv 0.
\ee

Multiplying (\ref{rr}) by $\hat p_{n-1}(x)w(x)$ and integrating gives
\be\la{B}
B_n=h_{n-1}^{-1}\int_{-\infty}^\infty \hat p_n(x) \hat p_{n-1}(x)xw(x)dx=
{h_n\over h_{n-1}}={\ka^2_{n-1}\over\ka^2_n}.
\ee

Let us fix the notation for the two leading coefficients of the polynomials
$\hat p_n(x)$ as follows:
\[
\hat p_n(x)=x^n+\beta_n x^{n-1}+\ga_n x^{n-2}+\cdots
\]
Comparing the coefficients at the powers $x^n$ and
$x^{n-1}$ in the recurrence relation (\ref{rr}), we obtain 
the following identities we use later on:
\be\la{coeffid}
A_n=\beta_n-\beta_{n+1},\qquad
\left(\ka_{n-1}\over \ka_n\right)^2=\ga_n-\ga_{n+1}
-\beta_n^2+\beta_n\beta_{n+1}.
\ee

Our methods allow us to compute various asymptotics for polynomials $p_n(x)$
as $n\to\infty$.
One such question is how the singularity of the
weight affects the recurrence coefficients $A_n$, $B_n$ for large $n$.
This was discussed by Chen and Pruessner \cite{CP} who found 
a behavior oscillating in $n$ involving $\cos(c_1\ln n+ c_2)$,
but did not determine the constant phase $c_2$.
We prove

\begin{theorem}\la{Th2}
Fix $\lb_0\in(-1,1)$, $\Re\beta\in (-1/2,1/2)$, $\beta\neq 0$.
Let $\mu_0=\lb_0\sqrt{2n}$.
Then as $n\to\infty$, the recurrence coefficients satisfy
\be\la{An}
\eqalign{
A_n=-{i\bt^2\sin\pi\bt\over 4\pi\sqrt{2n}}
\left(
(a+a^{-1})(8n)^\bt (1-\lb_0^2)^{3\bt/2}\Gamma(-\bt)
e^{n\phi_+(\lb_0)}\right.\\
\left.
+i(a-a^{-1})(8n)^{-\bt}(1-\lb_0^2)^{-3\bt/2}\Gamma(\bt)
e^{-n\phi_+(\lb_0)}
\right)^2\left[1+O\left({1\over n^{1-2|\Re\beta|}}\right)\right],}
\ee
\be\la{Bn}
\eqalign{
B_n={n\over 2}-{i\bt\lb_0\over 2\sqrt{1-\lb_0^2}}+
{\pi\sinh\phi_+(\lb_0)\over 4(1-\lb_0^2)\sin\pi\bt}\\
\times
\left[(8n)^{2\bt} (1-\lb_0^2)^{3\bt}\Gamma(\bt)^{-2}
e^{(2n+1)\phi_+(\lb_0)+i\arcsin\lb_0}\right.\\
\left.
+(8n)^{-2\bt} (1-\lb_0^2)^{-3\bt}\Gamma(-\bt)^{-2}
e^{-(2n+1)\phi_+(\lb_0)-i\arcsin\lb_0}\right]+
O\left({1\over n^{1-2|\Re\beta|}}\right),}
\ee
where 
\[
a(z)=\left({1-\lb_0\over 1+\lb_0}\right)^{1/4}e^{i\pi/4},\qquad 
\phi_+(\lb_0)=2i\int_{\lb_0}^1\sqrt{1-x^2}dx.
\]

In particular, for $\bt=i\ga$, $\ga\in\bbr\setminus\{0\}$, 
\be\la{An2} 
\eqalign{
A_n={2\ga\over\sqrt{2n(1-\lb_0^2)}}\sin^2\left(n\left(\arcsin\lb_0+
\lb_0\sqrt{1-\lb_0^2}-{\pi\over2}\right)
-\ga\ln(8n)\right.\\
\left.
+\arg\Gamma(i\ga)-{3\ga\over 2}\ln(1-\lb_0^2)+\arctan{\lb_0\over 
1+\sqrt{1-\lb_0^2}}\right)
[1+O(1/n)],}
\ee
\be\la{Bn2}
\eqalign{
B_n={n\over 2}+ {\ga\lb_0\over 2\sqrt{1-\lb_0^2}}+
{\ga\over 2(1-\lb_0^2)}\cos\left(\arcsin\lb_0+
\lb_0\sqrt{1-\lb_0^2}\right)\\
\times\cos\left((2n+1)\left(\arcsin\lb_0+
\lb_0\sqrt{1-\lb_0^2}-{\pi\over 2}\right)
-2\ga\ln(8n)\right.\\
\left.
+2\arg\Gamma(i\ga)-3\ga\ln(1-\lb_0^2)-\arcsin\lb_0\right)
+O(1/n).}
\ee

If furthermore 
$\lb_0=0$, the above equations become
\begin{eqnarray}
A_n={2\ga\over\sqrt{2n}}\sin^2(\ga\ln(8n)-\arg\Gamma(i\ga)+\pi n/2)
[1+O(1/n)],\la{An3}\\
B_n={n\over 2}+ {\ga\over 2}
\cos(2\ga\ln(8n)-2\arg\Gamma(i\ga)+\pi(n+1/2))
+O(1/n).\la{Bn3}
\end{eqnarray}
\end{theorem}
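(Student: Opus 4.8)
The plan is to extract the recurrence coefficients from the same Riemann--Hilbert (RH) analysis that yields Theorem~\ref{Th1}. Let $Y(z)$ be the standard $2\times2$ matrix solving the RH problem for the polynomials orthogonal with respect to (\ref{w}): $Y$ is analytic in $\bbc\setminus\bbr$, satisfies the jump $Y_+(x)=Y_-(x)\bigl(\begin{smallmatrix}1 & w(x)\\ 0 & 1\end{smallmatrix}\bigr)$, and $Y(z)=\left(I+Y_1/z+Y_2/z^2+\cdots\right)z^{n\si_3}$ as $z\to\infty$. Reading off the columns, the monic polynomial sits in the first column so that $\bt_n=(Y_1)_{11}$ and $\ga_n=(Y_2)_{11}$, while $(Y_1)_{12}=-h_n/(2\pi i)$ and $(Y_1)_{21}=-2\pi i\,\ka_{n-1}^2$. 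A one-line orthogonality computation using $\int\hat p_n(s)s^{n+1}w(s)\,ds=(A_n-\bt_n)h_n$ then gives the extraction formulas $B_n=(Y_1)_{12}(Y_1)_{21}$ and $A_n=(Y_1)_{11}+(Y_2)_{12}/(Y_1)_{12}$, which are equivalent to (\ref{B}) and (\ref{coeffid}). Thus the whole theorem reduces to the large-$n$ asymptotics of the two leading coefficients $Y_1,Y_2$, and to the solvability of the RH problem for large $n$, which also supplies the existence of $\hat p_n$, $h_n\neq0$ promised in the text when $\Re\bt\neq0$.

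To obtain $Y_1,Y_2$ I would run the Deift--Zhou steepest descent already set up for Theorem~\ref{Th1}. Rescaling $z=\sqrt{2n}\,\ze$ places the support of the semicircle equilibrium measure on $[-1,1]$ and the jump point at $\ze=\lb_0$; normalizing by the $g$-function and opening lenses turns the oscillatory jump on $(-1,1)$ into off-contour jumps that are exponentially close to $I$, leaving a model problem. Its solution is built from a global parametrix $N$ assembled from the Szeg\H o function---whose Fisher--Hartwig factor carries the exponent $\bt$ attached to $\lb_0$---together with Airy parametrices at the soft edges $\pm1$ and, crucially, a confluent hypergeometric parametrix at the bulk point $\lb_0$. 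This last local parametrix is the heart of the matter: built from the $\Psi$-function of the confluent hypergeometric equation, its large-argument expansion generates the $\Ga(\pm\bt)$ factors and the combination $G(1+\bt)G(1-\bt)$ of (\ref{as}). Matching $N$ against the local parametrices on the surrounding circles yields the first correction in the final small-norm problem $R=I+R_1+\cdots$, whose entries are precisely the oscillatory quantities $(8n)^{\pm\bt}(1-\lb_0^2)^{\pm3\bt/2}\Ga(\mp\bt)\,e^{\pm n\ph_+(\lb_0)}$ appearing in (\ref{An})--(\ref{Bn}), where $\ph_+(\lb_0)=-i\bigl(\arcsin\lb_0+\lb_0\sqrt{1-\lb_0^2}-\pi/2\bigr)$ is purely imaginary for real $\lb_0$.

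I would then unravel the transformations $Y\to T\to S\to R$ and read off $Y_1,Y_2$ from the $1/z$ and $1/z^2$ terms of the reconstructed solution at $\ze=\infty$. Because $A_n$ and $B_n$ are \emph{quadratic} functionals of the RH data, the two conjugate oscillatory entries of $R_1$ enter as a product: this produces the squared combination in (\ref{An}) and the doubled parameters $2\bt$, $(2n+1)$ in (\ref{Bn}). The outer parametrix alone reproduces only the non-oscillatory pieces $B_n\sim n/2-i\bt\lb_0/(2\sqrt{1-\lb_0^2})$, while the entire order-$n^{-1/2}$ behaviour of $A_n$ and the oscillatory part of $B_n$ are forced by $R_1$. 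Substituting into the extraction formulas gives (\ref{An}) and (\ref{Bn}) with the stated error. The purely imaginary specialization $\bt=i\ga$ then collapses the complex exponentials into the real trigonometric forms (\ref{An2})--(\ref{Bn2}) via $(8n)^{\pm i\ga}=e^{\pm i\ga\ln(8n)}$, $\Ga(-i\ga)=\overline{\Ga(i\ga)}$, and the explicit value of $\ph_+(\lb_0)$; setting $\lb_0=0$ yields (\ref{An3})--(\ref{Bn3}).

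The main obstacle is the construction and, above all, the precise matching of the confluent hypergeometric parametrix at $\lb_0$, since it is the phase of this matching that fixes the constant left undetermined by Chen and Pruessner \cite{CP}. Every ingredient of the final phase---$\arg\Ga(i\ga)$, the $\ga\ln(8n)$ term, the factor $(1-\lb_0^2)^{-3\ga/2}$, and the geometric phase $\arcsin\lb_0+\lb_0\sqrt{1-\lb_0^2}$---must be tracked through the exact boundary values of the Szeg\H o function and of $e^{\pm n\ph_+(\lb_0)}$ at the jump point. Controlling these constants simultaneously, rather than any individual estimate, is the delicate part; by contrast the edge Airy contributions are of lower order $O(1/n)$ and the lens and off-support jumps are exponentially small, so they affect only the error term.
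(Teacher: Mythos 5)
Your setup and your treatment of $B_n$ essentially coincide with the paper's: the identity $B_n=(Y_1)_{12}(Y_1)_{21}=h_n/h_{n-1}$ is equivalent to the paper's use of (\ref{B}), and the paper likewise extracts it from the $1/z$ coefficient of the reconstructed solution, with $R_1$ assembled from the residues of $\De_1$ at $\lb_0$ (the $L_n,M_n,N_n$ terms) and at $\pm1$; your single-problem formula merely replaces the paper's shift $n\to n+1$, $\lb_0\to\lb_0\sqrt{n/(n+1)}$, and at the $O(n^{-1+2|\Re\bt|})$ accuracy of (\ref{Bn}) this route goes through.

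For $A_n$, however, there is a genuine gap. Your extraction formula $A_n=(Y_1)_{11}+(Y_2)_{12}/(Y_1)_{12}$ is exact, but it expresses a quantity of size $O(n^{-1/2+2|\Re\bt|})$ as the difference of two terms of size $O(\sqrt{n})$ (indeed $(Y_1)_{11}=\bt_n\sim\sqrt{2n}\,i\bt\sqrt{1-\lb_0^2}$ and $(Y_2)_{12}/(Y_1)_{12}=A_n-\bt_n$). With the accuracy you invoke, $R=I+R_1+O(n^{-2+2|\Re\bt|})$, each term is known only to absolute precision $O(n^{-3/2+2|\Re\bt|})$, so the cancellation leaves an \emph{additive} error of that size. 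This (i) fails to resolve the smaller of the two squared terms in (\ref{An}), which is of size $n^{-1/2-2|\Re\bt|}$, once $|\Re\bt|\ge 1/4$, and (ii) can never produce the \emph{multiplicative} error $[1+O(n^{-1+2|\Re\bt|})]$ attached to a perfect square: for $\bt=i\ga$ the main term $\frac{2\ga}{\sqrt{2n}}\sin^2(\cdots)$ passes near zero along subsequences of $n$, where the multiplicative statement is strictly stronger than any truncated additive expansion, and no finite number of $R_j$'s recovers it. The paper avoids this entirely by first deriving, via integration by parts against the jump weight, the exact local identity $A_n=-h_n^{-1}\hat p_n^2(\mu_0)e^{-\mu_0^2}\sinh(i\pi\bt)$ (eq. (\ref{A})), which makes $A_n$ manifestly a square of the polynomial's value at the jump point; $\hat p_n(\mu_0)=Y_{11}(\mu_0)$ is then evaluated through the confluent hypergeometric parametrix using the \emph{small}-argument behavior (\ref{psi0}) of $\psi(a,1;\ze)$ — this is where the $\Gamma(\mp\bt)$ factors of (\ref{An}) actually enter, via $\psi(\bt,\ze)+\psi(1-\bt,e^{-i\pi}\ze)f(\bt)=e^{-i\pi\bt}\Gamma(1-\bt)+O((z-\lb_0)\ln(z-\lb_0))$ — whereas your proposal relies only on the large-argument expansion of the parametrix. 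Without this localization step (or some substitute giving the square structure exactly), your argument establishes (\ref{An}) neither in the stated form nor on the stated range $\Re\bt\in(-1/2,1/2)$.
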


Except for the phase
$\arg\Gamma(i\ga)$,
(\ref{An3}) and (\ref{Bn3}) were found in \cite{CP}. 
Note also that similar formulas for the recurrence
coefficients for the {\it Jacobi} weight perturbed by a jump were conjectured
by Magnus \cite{Magnus}. Our approach can be modified to verify
this conjecture.\footnote{In fact, 
Magnus conjectured asymptotics of the recurrence coefficients
for a perturbation which is a jump {\it multiplied} by a root-like singularity
at the same point.
Our approach would work in this general case as well. The case of
the Jacobi weight perturbed by only root-like singularities was resolved 
by Vanlessen in \cite{V}.}
We will address this question in a future publication.

Our proofs of Theorems \ref{Th1} and \ref{Th2} are based on 
the Riemann-Hilbert problem
(RHP) approach to orthogonal polynomials 
\cite{FIK} (see the next section), combined with the steepest
descent method for RHP of Deift and Zhou \cite{DZ} which proved to be very
successful in the analysis of asymptotic behavior of orthogonal
polynomials and related determinants (see, \cite{Dbook} for
an introduction and bibliography of earlier works, and 
\cite{KM,KV,V,KVA,BK,D,JBD,Kr1,Kr2,DIKZ,DIK2}).
The  steepest
descent method for a RHP for the orthogonal polynomials given by 
expression (\ref{OP})
allows us to prove Theorem \ref{Th2}. The main technical difficulty
here is the construction of an approximate solution
(parametrix) of a related RHP
in the neighborhood of $\lb_0$. We find a new parametrix given in
terms of the confluent hypergeometric function.

In order to prove Theorem \ref{Th1}, an additional analysis is needed. It is
based on a classical formula connecting $D_n(\beta)$ and the
orthogonal polynomials $p_k(z)=\ka_k z^k+\cdots$, namely  
\be
D_n(\beta)=\prod_{j=0}^{n-1}\ka_j^{-2}.\la{Dchi}
\ee
From this expression (cf. \cite{EM, BI, Kr1, Kr2, DIK2}) one 
can derive an identity for $(d/d\beta)\ln D_n(\beta)$  in terms of only $p_n(\lb_0)$, $p_{n-1}(\lb_0)$,
and several leading coefficients of $p_n(x)$.
Substituting the asymptotic expansions for the polynomials in this
identity and integrating over $\beta$ from $\beta=0$ to some $\beta$,
we obtain the asymptotics of $D_n(\beta)/D_n(0)$.

\section{Riemann-Hilbert problem for $p_k(z)$}

Consider the following Riemann-Hilbert problem for a 
$2\times 2$ matrix valued function $Y(z,k)\equiv Y(z)$ and the weight
$w(x)$ given in (\ref{w}):
\begin{enumerate}
    \item[(a)]
        $Y(z)$ is  analytic for $z\in\bbc \setminus\bbr$.
    \item[(b)] 
Let $x\in\bbr\setminus\mu_0$.
$Y$ has $L_2(\bbr)$ boundary values
$Y_{+}(x)$ as $z$ approaches $x$ from
above, and $Y_{-}(x)$, from below, 
related by the jump condition
\begin{equation}\label{RHPYb}
            Y_+(x) = Y_-(x)
            \begin{pmatrix}
                1 & w(x) \cr
                0 & 1
             \end{pmatrix},
            \qquad\mbox{$x\in\bbr\setminus\mu_0$.}
        \end{equation}
    \item[(c)]
        $Y(z)$ has the following asymptotic behavior at infinity:
        \begin{equation} \label{RHPYc}
            Y(z) = \left(I+ O \left( \frac{1}{z} \right)\right)
            \begin{pmatrix}
                z^{k} & 0 \cr
                0 & z^{-k}\end{pmatrix}, \qquad \mbox{as $z\to\infty$.}
        \end{equation}
\end{enumerate}

It is easy to verify that, if the polynomials satisfying (\ref{OPhp}) exist,
this problem has a solution given by the function:
\begin{equation} \label{RHM}
    Y(z) =
    \begin{pmatrix}
\ka_k^{-1}p_k(z) & 
\ka_k^{-1}\int_{-\infty}^{\infty}{p_k(\xi)\over \xi-z}
{w(\xi)d\xi \over 2\pi i } \cr
-2\pi i\ka_{k-1}p_{k-1}(z) & 
-\ka_{k-1}\int_{-\infty}^{\infty}{p_{k-1}(\xi)\over \xi-z} w(\xi)d\xi 
    \end{pmatrix}.
\end{equation}

Note, in particular, that $\det Y(z)=1$. Indeed, from the conditions on $Y(z)$, 
$\det Y(z)$ is analytic across the real axis, has all singularities 
removable, and tends to $1$ as $z\to\infty$. 
It is then identically $1$ by Liouville theorem. Also, the solution is unique: 
if there is another solution $\wt Y(z)$, we easily obtain by Liouville theorem
that $Y(z) \wt Y(z)^{-1}\equiv 1$.

In the next section we derive an expression for 
$(d/d\beta)\ln D_n(\beta)$
in terms of the matrix elements of (\ref{RHM}), and 
in the section after that we compute the asymptotics of $Y(z)$ using (a) -- (c).

The existence of the system of orthogonal polynomials $p_k(z)=\ka_k z^k+\cdots$ 
satisfying (\ref{OP})
with nonzero leading coefficients $\ka_k$ for real $w(x)$ is a 
classical fact. Moreover, the coefficients $\ka_k^2=D_k/D_{k+1}$ 
are regular functions of $\bt$
(as follows from the determinantal representation for $D_k$ \ci{Sz, Dbook}). 
For all complex $\bt$ 
in any fixed closed bounded set of the strip $-1/2<\Re\bt<1/2$, 
(denote this set by $\wt\Om$),
we shall prove below the existence of a solution to
the Riemann-Hilbert problem for all $n$ larger then some $n_0>0$. Its asymptotics
will be explicitly 
constructed. We shall also see that the coefficients $\ka_k$, $k>n_0$ are nonzero 
and finite for all such $\bt$. For $k\le n_0$, the coefficients $\ka_k^2$ 
are regular
and nonzero (as follows from the determinantal representation) 
outside of a possible subset $\Om$ of $\wt\Om$. 
As a consequence of the 
determinantal representation, the system of the orthogonal polynomials exists, and
the formula (\ref{Dchi}) holds for $\bt\in \wt\Om\setminus \Om$. 
Throughout Section 3 
(and, hence, in the differential identity 
obtained there) we assume that $\bt\notin\Om$. 
(There is no such condition on $\bt$ in Section 4.) This provides in Section 5
a proof of Theorem \ref{Th1} for $\bt$ outside the set $\Om$. 
However, as we shall see
below, the error term in the asymptotics of $D_n$ is uniform for {\it all} 
$\bt\in\wt\Om$.
Theorem \ref{Th1} will follow then by continuity.

\section{Differential identity}
We now derive a differential identity needed to prove Theorem \ref{Th1}.
Throughout this section, we consider $n$ a {\it fixed} positive integer and
$\bt\in \wt\Om\setminus \Om$ (see previous section).
We start with the following identity obtained in (\cite{Kr2}, eq. (17)) 
for a weight $w(x)$ depending on a parameter $\bt$:
\be\la{D2}\eqalign{
{d\over d\bt}\ln D_n(\bt)=
-n{\ka'_{n-1,\bt}\over\ka_{n-1}}+{\ka_{n-1}\over\ka_n}(J_1-J_2),\\
J_1=\int_{-\infty}^\infty p'_{n,\bt}(x)p'_{n-1,x}(x)w(x)dx,
\qquad
J_2=\int_{-\infty}^\infty p'_{n,x}(x)p'_{n-1,\bt}(x)w(x)dx.}
\ee
Here the prime and the lower index $\bt$ (respectively, $x$) 
stand for the derivative w.r.t. 
$\bt$ (respectively, $x$).

This identity contains polynomials of order $n$ and $n-1$ only and 
therefore can be used to calculate asymptotics of Hankel determinants
for rather general weights $w(x)$. However, as in the situation
for weights with root-type singularities considered in \cite{Kr2},
the weights with jumps, as will be shown below, allow us to 
reduce the above
identity to a ``local'' one involving only the values of the matrix elements of
$Y(z,n)$ at particular points. More precisely, for our weight (\ref{w})
we will obtain an identity in terms of $p_n(\mu_0)$, $p_{n-1}(\mu_0)$,
their derivatives, and several polynomial coefficients. 

Note that $p_n(x)$ are analytic functions of $\bt$, $\bt\notin\Om$,
as follows, e.g., from their representation as a determinant.

For the reader's convenience,
we recall first the derivation of (\ref{D2}).
Using (\ref{Dchi}) and the orthogonality condition (\ref{OPp}), we have
(note that $p'_{j,\bt}(x)=\ka'_{j,\bt}x^j+\mbox{polynomial of degree less than j}$)
\be\la{D1}\eqalign{
{d\over d\bt}\ln D_n(\bt)={d\over d\bt}\ln 
\prod_{j=0}^{n-1}\ka_j^{-2}=
-2\sum_{j=0}^{n-1}{\ka'_{j,\bt}\over \ka_j}=
-2\sum_{j=0}^{n-1}
\int_{-\infty}^\infty p_j(x)p'_{j,\bt}(x)w(x)dx=\\
-\int_{-\infty}^\infty \left(\sum_{j=0}^{n-1}p^2_j(x)\right)'_\bt
w(x)dx.}
\ee

By the Christoffel-Darboux formula (e.g., \ci{Sz}),
\be\la{CD}
\sum_{j=0}^{n-1}p^2_j(x)={\ka_{n-1}\over \ka_n}
(p'_{n,x}(x)p_{n-1}(x)-p_n(x)p'_{n-1,x}(x)).
\ee

Substituting (\ref{CD}) into (\ref{D1}), differentiating it
w.r.t. $\bt$, and using orthogonality, we obtain (\ref{D2}).

Let us now evaluate $J_1$. 
Integrating by parts, 
\be\eqalign{
J_1= \int_{-\infty}^{\mu_0} p'_{n,\bt}(x)p'_{n-1,x}(x)e^{-x^2}e^{i\pi\bt}dx
+
\int_{\mu_0}^\infty p'_{n,\bt}(x)p'_{n-1,x}(x)e^{-x^2}e^{-i\pi\bt}dx\\
=2\sinh{(i\pi\bt)}e^{-\mu_0^2}p'_{n,\bt}(\mu_0)p_{n-1}(\mu_0)-
\int_{-\infty}^\infty p_{n-1}(x)p''_{n,\bt,x}(x)w(x)dx\\
+2\int_{-\infty}^\infty p_{n-1}(x)p'_{n,\bt}(x) x w(x)dx.}
\ee

The first integral in the r.h.s. equals $n\ka'_{n,\bt}/\ka_{n-1}$ by 
orthogonality, and the second can be written as
\[
B\equiv\int_{-\infty}^\infty
p_{n-1}(x)(\ka'_{n,\bt}x^{n+1}+(\ka_n\beta_n)'_\bt
x^n+(\ka_n\ga_n)'_\bt x^{n-1}+\cdots)w(x)dx.
\]
As in \cite{Kr2},
expanding $x^{n+1}$ and $x^n$ in terms of the polynomials $p_j(x)$,
we obtain, by orthogonality and (\ref{coeffid}),
\be
B={\ka_{n}\over \ka_{n-1}}\left[
{\ka'_{n,\bt}\over \ka_n}
\left(\ka_{n-1}\over \ka_n\right)^2
+\ga'_{n,\bt}-\beta_n\beta'_{n,\bt}\right].
\ee
Finally,
\be
J_1=-n{\ka'_{n,\bt}\over \ka_{n-1}}+
2\sinh(i\pi\bt)e^{-\mu_0^2}p'_{n,\bt}(\mu_0)p_{n-1}(\mu_0)+2B.
\ee
A similar calculation for $J_2$ yields
\be
J_2=2{\ka'_{n-1,\bt}\over \ka_n}+
2\sinh(i\pi\bt)e^{-\mu_0^2}p_n(\mu_0)p'_{n-1,\bt}(\mu_0).
\ee
Substituting the above expressions into (\ref{D2})
and using (\ref{RHM}), we obtain

\begin{proposition}\la{Prop1} 
Let $\bt$, $-1/2<\Re\bt\le 1/2$, be such that the
system of orthogonal polynomials $p_k(z)$ with finite 
non-vanishing leading coefficients 
satisfying (\ref{OP}) exists.
Fix $n>1$. Let $p_n=\ka_n(x^n+\beta_n x^{n-1}+\gamma_n
x^{n-2}+\cdots)$, and the matrix $Y(z)$ be given by (\ref{RHM}). 
Then   
\be\eqalign{
{d\over d\bt}\ln D_n(\bt)=
-n(\ln\ka_n\ka_{n-1})'_{\bt}-
\left(\ka_{n-1}^2\over \ka_n^2\right)'_\bt+
2\left(\ga'_{n,\bt}-\beta_n\beta'_{n,\bt}\right)+\\
{1\over\pi}e^{-\mu_0^2}\sin{(\pi\bt)}
\left[Y_{11}(\mu_0)Y_{21}(\mu_0)'_\bt-
Y_{11}(\mu_0)'_{\bt} Y_{21}(\mu_0)
-(\ln\ka_n\ka_{n-1})'_{\bt}Y_{11}(\mu_0) Y_{21}(\mu_0)\right].}\la{id}
\ee
\end{proposition}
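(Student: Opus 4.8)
The plan is to take the differential identity (\ref{D2}) together with the expressions for $J_1$ and $J_2$ already obtained above, substitute them in, and sort the result into two pieces: a ``bulk'' part expressible through the leading coefficients $\ka_n,\ka_{n-1},\beta_n,\ga_n$ and their $\bt$-derivatives, and a ``boundary'' part living at $\mu_0$, which I then rewrite through the entries of $Y$ via (\ref{RHM}). Because the $J_1$, $J_2$ evaluations are in hand, the remaining work is the substitution followed by two separate collections of terms.

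For the bulk part I would collect the non-boundary pieces $-n\ka'_{n,\bt}/\ka_{n-1}+2B$ from $J_1$ and $2\ka'_{n-1,\bt}/\ka_n$ from $J_2$, multiply by the prefactor $\ka_{n-1}/\ka_n$ appearing in (\ref{D2}), and add the standalone term $-n\ka'_{n-1,\bt}/\ka_{n-1}$. The $n$-proportional contributions then combine into $-n(\ln\ka_n\ka_{n-1})'_\bt$ once one recognizes $\ka'_{n,\bt}/\ka_n+\ka'_{n-1,\bt}/\ka_{n-1}=(\ln\ka_n\ka_{n-1})'_\bt$. Inserting the explicit $B=\frac{\ka_n}{\ka_{n-1}}[\frac{\ka'_{n,\bt}}{\ka_n}(\frac{\ka_{n-1}}{\ka_n})^2+\ga'_{n,\bt}-\beta_n\beta'_{n,\bt}]$ and expanding $(\ka_{n-1}^2/\ka_n^2)'_\bt$ by the quotient rule, the surviving coefficient terms should reduce to $-(\ka_{n-1}^2/\ka_n^2)'_\bt+2(\ga'_{n,\bt}-\beta_n\beta'_{n,\bt})$; this is routine algebra using only (\ref{coeffid}) and $\ka_k^{-2}=h_k$.

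For the boundary part I would start from $\frac{\ka_{n-1}}{\ka_n}\,2\sinh(i\pi\bt)e^{-\mu_0^2}(p'_{n,\bt}(\mu_0)p_{n-1}(\mu_0)-p_n(\mu_0)p'_{n-1,\bt}(\mu_0))$ and use (\ref{RHM}) with $k=n$ to substitute $p_n(\mu_0)=\ka_n Y_{11}(\mu_0)$ and $p_{n-1}(\mu_0)=-(2\pi i\ka_{n-1})^{-1}Y_{21}(\mu_0)$. The one place demanding care is the $\bt$-differentiation of these relations: because $\ka_n,\ka_{n-1}$ themselves depend on $\bt$, one gets $p'_{n,\bt}(\mu_0)=\ka'_{n,\bt}Y_{11}(\mu_0)+\ka_n Y_{11}(\mu_0)'_\bt$ and an analogous expression for $p'_{n-1,\bt}(\mu_0)$ carrying an extra $\ka'_{n-1,\bt}/\ka_{n-1}$ factor. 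Substituting all four quantities, the explicit $\ka_n,\ka_{n-1}$ cancel against the prefactor $\ka_{n-1}/\ka_n$, the antisymmetric combination leaves $Y_{11}(\mu_0)Y_{21}(\mu_0)'_\bt-Y_{11}(\mu_0)'_\bt Y_{21}(\mu_0)$, and the surviving normalization-derivative terms assemble precisely into $-(\ln\ka_n\ka_{n-1})'_\bt Y_{11}(\mu_0)Y_{21}(\mu_0)$. Finally $2\sinh(i\pi\bt)=2i\sin\pi\bt$ combines with the residual $1/(2\pi i)$ to produce the overall factor $\frac{1}{\pi}e^{-\mu_0^2}\sin\pi\bt$, reproducing the bracket in (\ref{id}).

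The main obstacle is purely bookkeeping: tracking the normalization-derivative corrections generated by the $\bt$-dependence of $\ka_n,\ka_{n-1}$ when converting polynomial values at $\mu_0$ into entries of $Y$, and verifying that they collect into the single $-(\ln\ka_n\ka_{n-1})'_\bt Y_{11}(\mu_0)Y_{21}(\mu_0)$ term rather than leaking into the bulk part. Since $p_n$ is analytic in $\bt$ for $\bt\notin\Om$ (as noted above), every $\bt$-derivative here is legitimate, so no analytic difficulty arises beyond this; the remainder is the quotient-rule computation already indicated.
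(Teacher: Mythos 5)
Your proposal is correct and takes essentially the same approach as the paper: the paper's proof of Proposition \ref{Prop1} is exactly the substitution of the previously derived expressions for $J_1$ and $J_2$ into (\ref{D2}), followed by the conversion $p_n(\mu_0)=\ka_n Y_{11}(\mu_0)$, $p_{n-1}(\mu_0)=-Y_{21}(\mu_0)/(2\pi i\ka_{n-1})$ via (\ref{RHM}). Your bookkeeping of the $\bt$-derivative corrections (the $\ka'_{n,\bt}/\ka_n$ and $\ka'_{n-1,\bt}/\ka_{n-1}$ terms collecting into $-(\ln\ka_n\ka_{n-1})'_\bt\,Y_{11}(\mu_0)Y_{21}(\mu_0)$, and the factor $2\sinh(i\pi\bt)/(2\pi i)$ giving $\sin(\pi\bt)/\pi$) checks out, so there is no gap.
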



\section{Asymptotic analysis of the Riemann-Hilbert problem}

The standard steps of the steepest descent analysis 
applied to the Riemann-Hilbert problem (a) -- (c) of Section 2 
involve rescaling by $\sqrt{2n}$ so that the support of 
the equilibrium measure becomes $[-1,1]$, deformation of the contour
such that the jump matrix becomes close to the identity,
construction of local parametrices around the points $1$, $-1$, and 
$\lb_0=\mu_0/\sqrt{2n}$,
and matching the parametrices with the solution in the region outside 
those points. All these steps are standard except for the construction
of the parametrix around $\lb_0$. 
In this section we carry out the analysis and obtain Theorem \ref{Th2} as well as
the asymptotics necessary for the proof of Theorem \ref{Th1} in the last section.
Henceforth we assume that $\lb_0\in (-1,1)$.

\subsection{Three transformations of the Riemann-Hilbert problem} 
We now perform standard transformations of the RHP.
The first one $Y\to U$ is a scaling:
\be\la{U}
Y(z\sqrt{2n})=(2n)^{n\si_3/2}U(z),\qquad \si_3=\begin{pmatrix}1&0\cr 0&-1
\end{pmatrix}.
\ee  
The second one $U\to T$ is given by the formula
\be\la{T}
U(z)=e^{nl\si_3/2}T(z)e^{n(g(z)-l/2)\si_3},
\ee
where
\be\la{gf}\eqalign{
l=-1-2\ln 2,\\
g(z)=\int_{-1}^1\ln(z-s)\psi(s)ds,\qquad z\in\bbc\setminus(-\infty,1],\qquad
\psi(z)={2\over\pi}\sqrt{1-z^2}.}
\ee
Below we
always take the principal branch of the logarithm and roots.
The function $\psi(z)$
is the scaled asymptotic density of zeros of $p_n(z)$. 
(Note that $\int_{-1}^1\psi(x)dx=1$.)
The function $g(z)$ has the following useful properties:
\be\la{g}
\eqalign{
g_+(x)+g_-(x)-2x^2-l=0,\qquad \mathrm{for}\quad x\in(-1,1)\\
g_+(x)+g_-(x)-2x^2-l<0,\qquad \mathrm{for}\quad x\in\bbr\setminus[-1,1]\\
g_+(x)-g_-(x)=\begin{cases}2\pi i, & \mbox{for}\quad x\le-1\cr
2\pi i\int_x^1\psi(y)dy, & \mbox{for}\quad x\in [-1,1]\cr
0, & \mbox{for}\quad x\ge 1\end{cases}}
\ee

From the RHP for $Y(z)$, we obtain the following 
problem for $T(z)$:

\begin{enumerate}
    \item[(a)]
        $T(z)$ is  analytic for $z\in\bbc \setminus\bbr$.
    \item[(b)]  
The boundary values of $T(z)$ are related by the jump condition
\begin{equation}
\eqalign{   T_+(x) = T_-(x)
            \begin{pmatrix}
                e^{-n(g_+(x)-g_-(x))} & \om(x) \cr
                0 & e^{n(g_+(x)-g_-(x))}\end{pmatrix},
            \qquad\mbox{$x\in(-1,1)$.}\\
            T_+(x) = T_-(x)
            \begin{pmatrix}
                1 & \om(x)e^{n(g_+(x)+g_-(x)-2x^2-l)}\cr
                0 & 1\end{pmatrix},
            \qquad\mbox{$x\in\bbr\setminus[-1,1]$.}}
        \end{equation}
    \item[(c)]
$T(z)=I+O(1/z)$ as $z\to\infty$,
\end{enumerate}
where $\om(x)$ are the values on the real axis of the following 
function defined in the whole complex plane with a slit along the line
$\Re z=\lb_0$:
\be
\om(z)=\begin{cases}e^{i\beta\pi},& \Re z<\lb_0\cr
e^{-i\beta\pi},& \Re z\ge\lb_0\end{cases}.
\ee
Note that this problem is normalized to $1$ at infinity, and 
the jump matrix on $(-\infty,-1)\cup (1,\infty)$ is exponentially close
to the identity (see (\ref{g})). 
In order to have uniform bounds,
we have to exclude small neighborhoods of the points $-1$ and $1$,
where $g_+(x)+g_-(x)-2x^2-l$ is close to zero.

Let $h(z)$ be the analytic continuation of 
\be
h(x)=g_+(x)-g_-(x)=2\pi e^{3i\pi/2}\int_1^x\psi(y)dy
\ee
to $\bbc\setminus((-\infty,-1]\cup[1,\infty))$.
A simple analysis shows that $\Re h(z)>0$ for $\Im z>0$,
and $\Re h(z)<0$ for $\Im z<0$ in some neighborhood of $(-1,1)$. 
We again exclude 
neighborhoods of the points $-1$ and $1$ for a uniform estimate.

By the steepest descent method of Deift and Zhou, we now split
the contour as shown in
Figure 1. 
\begin{figure}
\centerline{\psfig{file=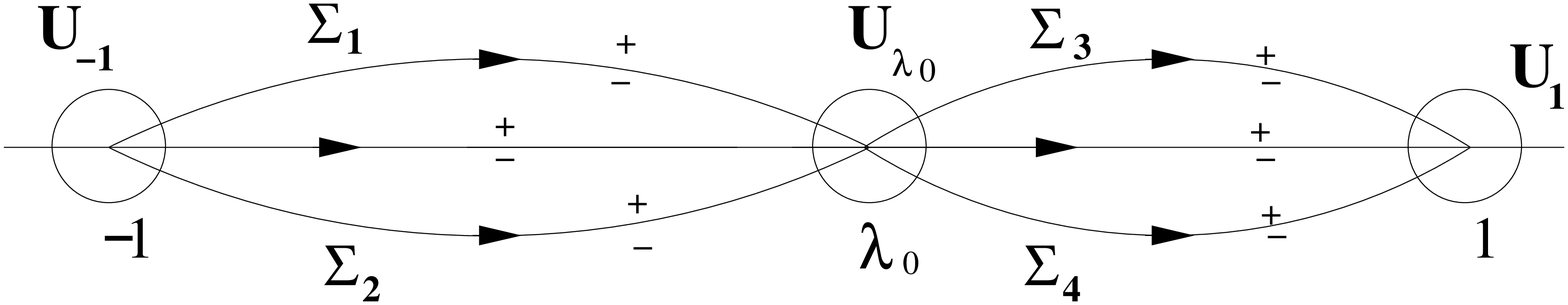,width=4.0in,angle=0}}
\vspace{0cm}
\caption{
Contour for the $S$-Riemann-Hilbert problem.}
\label{fig1}
\end{figure}
Define a new transformation of our matrix-valued function as follows:
\be\la{defS}
S(z)=
\begin{cases}T(z),& \mbox{for $z$ outside the lenses},\cr
T(z)\begin{pmatrix}1 & 0\cr -\om(z)^{-1}e^{-nh(z)}& 1\end{pmatrix}, & 
\mbox{for $z$ in the upper part of the lenses},\cr
 T(z)\begin{pmatrix}1 & 0\cr \om(z)^{-1}e^{nh(z)}& 1\end{pmatrix},&
\mbox{for $z$ in the lower part of the lenses}.
\end{cases}
\ee

Then the Riemann-Hilbert problem for $S(z)$ is the following:

\begin{enumerate}
    \item[(a)]
        $S(z)$ is  analytic for $z\in\bbc \setminus\Sigma$, where 
$\Si=\bbr\cup \cup_{j=1}^{4}\Si_j$.
    \item[(b)]  
The boundary values of $S(z)$ are related by the jump condition
\begin{equation}
\eqalign{   S_+(x) = S_-(x)
            \begin{pmatrix}
                  1 & 0\cr
                  \om(x)^{-1}e^{\mp n h(x)} & 1\end{pmatrix},
            \qquad\mbox{$x\in\cup_{j=1}^{4}\Sigma_j$},\\
\mbox{where the plus sign in the exponent is on 
$\Sigma_2$, $\Sigma_4$,
 and minus, on $\Sigma_1$, $\Sigma_3$,}\\
            S_+(x) = S_-(x)
              \begin{pmatrix}
                  0 & \om(x)\cr
                  -\om(x)^{-1} & 0\end{pmatrix},
            \qquad\mbox{$x\in(-1,1)$.}\\
            S_+(x) = S_-(x)
            \begin{pmatrix}
                1 & \om(x)e^{n(g_+(x)+g_-(x)-2x^2-l)}\cr
                0 & 1\end{pmatrix},
            \qquad\mbox{$x\in\bbr\setminus[-1,1]$.}}
        \end{equation}
    \item[(c)]
$S(z)=I+O(1/z)$ as $z\to\infty$.
\end{enumerate}

Recalling the remarks above, we see that,
outside the neighborhoods $U_{\lb_0}$, $U_{\pm1}$,
the jump matrix on $\Sigma_j$, $j=1,\dots,4$ is 
uniformly exponentially close to 
the identity. (Note that we had to contract the lenses at $\lb_0$, because
of the cut of $\om(z)$ across the line $\Re z=\lb_0$.)
We shall now construct the parametrices in $U_{\lb_0}$, $U_{\pm1}$,
and $U_\infty=\bbc\setminus(U_{\lb_0}\cup U_1\cup U_{-1})$. After that we shall
match them on the boundaries $\partial U_{\lb_0}$ and $\partial U_{\pm 1}$,
which will yield the desired asymptotics.

\subsection{Parametrix in $U_\infty$}

We expect the following problem for the parametrix $P_\infty$ in $U_\infty$:

\begin{enumerate}
    \item[(a)]
        $P_\infty(z)$ is  analytic for $z\in\bbc \setminus[-1,1]$,
    \item[(b)]
with the jump condition on $(-1,1)$
\be
P_{\infty,+}(x) = P_{\infty,-}(x)
            \begin{pmatrix}0&\om(x)\cr
              -\om(x)^{-1}&0\end{pmatrix},
\qquad\mbox{$x \in (-1,1)$},
\ee
\item[(c)]
and the following behavior at infinity
\be
P_\infty(z) = I+ O \left( \frac{1}{z} \right), 
     \qquad \mbox{as $z\to\infty$.}
\ee
\end{enumerate}
A solution $P_\infty(z)$ can be found in the same way as, e.g., in \ci{KVA}:
\be
P_\infty(z)={1\over 2}(\mathcal{D}_\infty)^{\si_3}
\begin{pmatrix}a+a^{-1}&-i(a-a^{-1})\cr i(a-a^{-1})& a+a^{-1}\end{pmatrix}
\mathcal{D}(z)^{-\si_3},
\qquad
a(z)=\left({z-1\over z+1}\right)^{1/4},\la{N}
\ee
where the cut of the root is the interval $(-1,1)$. 
Note that $\det P_\infty(z)=1$.
Here
\be
\mathcal{D}(z)=\exp\left[{\sqrt{z^2-1}\over 2\pi}\int_{-1}^1
{\ln \om(\xi)\over \sqrt{1-\xi^2}}
{d\xi \over z-\xi}\right],\qquad \mathcal{D}_\infty=\lim_{z\to\infty}
\mathcal{D}(z).\la{Sf}
\ee

The Szeg\H o function $\mathcal{D}(z)$ is
analytic outside the interval $[-1,1]$
with boundary values satisfying $\mathcal{D}_+(x)\mathcal{D}_-(x)=\om(x)$,
$x\in (-1,1)$.
Calculation of the elementary integral in (\ref{Sf}) gives
\be\la{calD}
\eqalign{\mathcal{D}(z)=
\left(z\lb_0-1-i\sqrt{(z^2-1)(1-\lb_0^2)}\over z-\lb_0\right)^\bt e^{i\pi\bt/2}\\
=\left(z-\lb_0\over z\lb_0-1+i\sqrt{(z^2-1)(1-\lb_0^2)}\right)^\bt e^{i\pi\bt/2}.}
\ee
From here it is easy to obtain as the main term of the expansion in $1/z$
at infinity
\be
\mathcal{D}_\infty=\left(i\lb_0+\sqrt{1-\lb_0^2}\right)^\bt=e^{i\bt\arcsin\lb_0}.
\ee
Below we will need 2 more terms in the expansion:
\be\la{calDinf}
{\mathcal{D}(z)\over \mathcal{D}_\infty}
=1-{i\bt\over z}\sqrt{1-\lb_0^2}-
{1\over 2z^2}\left(\bt^2(1-\lb_0^2)+i\bt\lb_0\sqrt{1-\lb_0^2}\right)
+O(z^{-3}),\qquad z\to\infty.
\ee
At the point $\lb_0$,
\be\label{Dlb}
\mathcal{D}(z)=c(\bt)(z-\lb_0)^\bt(1+o(1)),\quad c(\bt)=
2^{-\bt}(1-\lb_0^2)^{-\bt}e^{-i\pi\bt/2},\quad z\to\lb_0,\quad \Im z>0.
\ee
Note that in order to obtain the correct argument of the complex number $c(\bt)$, we returned to the analysis of the original integral in (\ref{Sf}).

Moreover, we will need the following expansions at $\pm 1$ which are 
easy to obtain from (\ref{calD}) and the definition of $a(z)$:
\be\la{calDp1}
\eqalign{
{\mathcal{D}^2(z)\over\om(z)}+
{\om(z)\over\mathcal{D}^2(z)}=
2\left( 1- 4\bt^2 {1+\lb_0 \over 1-\lb_0}u+O(u^{3/2})\right),\\
{\mathcal{D}^2(z)\over\om(z)}-
{\om(z)\over\mathcal{D}^2(z)}=
2^{5/2}i\bt\sqrt{1+\lb_0 \over 1-\lb_0}\sqrt{u}+O(u^{3/2}),\\
a^2+a^{-2}=\sqrt{2\over u}+{3\over 2}\sqrt{u\over 2}+O(u^{3/2}),
\qquad 
a^2-a^{-2}=-\sqrt{2\over u}+{1\over 2}\sqrt{u\over 2}+O(u^{3/2}),\\
u=z-1,\qquad z\to 1.}
\ee
and
\be\la{calDm1}
\eqalign{
{\mathcal{D}^2(z)\over\om(z)}+
{\om(z)\over\mathcal{D}^2(z)}=
2\left( 1+ 4\bt^2 {1-\lb_0 \over 1+\lb_0}u+O(u^{3/2})
\right),\\
{\mathcal{D}^2(z)\over\om(z)}-
{\om(z)\over\mathcal{D}^2(z)}=
-2^{5/2}\bt\sqrt{1-\lb_0 \over 1+\lb_0}\sqrt{u}+O(u^{3/2}),\\
a^2+a^{-2}=i\left(\sqrt{2\over u}-{3\over 2}\sqrt{u\over 2}\right)+O(u^{3/2}),
\quad 
a^2-a^{-2}=i\left(\sqrt{2\over u}+{1\over 2}\sqrt{u\over 2}\right)+O(u^{3/2}),\\
u=z+1,\qquad z\to -1.}
\ee



\subsection{Parametrix at the jump point}

Let us now construct the parametrix $P_{\lb_0}(z)$ in $U_{\lb_0}$. 
We look for an analytic matrix-valued function in a 
neighborhood of $U_{\lb_0}$
which satisfies the same jump conditions as $S(z)$ on
$\Sigma\cap U_{\lb_0}$, has the same behavior as $z\to\lb_0$, and 
satisfies the matching condition
\be\la{match}
P_{\lb_0}(z)P_\infty^{-1}(z)=I+o(1)
\ee
uniformly on the boundary $\partial U_{\lb_0}$ as $n\to\infty$.

Using the analytic continuation of $\psi(y)$ (see (\ref{gf})), define:
\be
\phi(z)=\begin{cases}h(z)/2=e^{3i\pi/2}\pi\int_1^z\psi(y)dy,& 
\mbox{for}\quad \Im z>0,\cr
e^{-i\pi}h(z)/2=e^{i\pi/2}\pi\int_1^z\psi(y)dy,& 
\mbox{for}\quad \Im z<0\end{cases}.
\ee
Clearly, $e^{\phi(z)}$ is analytic 
outside $[-1,1]$. Set
\be\la{f}
\hat f(z)=2\pi e^{i\pi/2}\int_{\lb_0}^z\psi(y)dy.
\ee
Let us now choose the exact form of the cuts $\Sigma$ in $U_{\lb_j}$ so that their images
under the mapping $\zeta=n\hat f(z)$ are straight lines (Figure 2).
Note that $\zeta(z)=n\hat f(z)$ is analytic and one-to-one in the neighborhood 
of $U_{\lb_0}$, and it takes the real axis to the imaginary axis.
We have
\be\la{zlb}
\zeta=n \hat f(z)=
4ne^{i\pi/2}\sqrt{1-\lb_0^2}(z-\lb_0)(1+O(z-\lb_0)),\qquad z\to\lb_0.
\ee

We look for $P_{\lb_0}(z)$ in the form
\be\la{Plb0}
P_{\lb_0}(z)=E(z)P^{(1)}(z)e^{-n\phi(z)\si_3},
\ee
where $E(z)$ is analytic and invertible in the neighborhood of $U_{\lb_0}$,
and therefore does not affect the jump and analyticity conditions. 
It is chosen so that the matching condition is satisfied.
It is easy to verify that $P^{(1)}(z)$ satisfies jump 
conditions with {\it constant}
jump matrices. Set
\be\la{P1}
P^{(1)}(z)=\Psi_{\bt}(\zeta)=\Psi_{\bt}(n \hat f(z)),
\ee
where $\Psi_{\bt}(\zeta)$ satisfies a RHP along cuts given in Figure 2: 

\begin{figure}
\centerline{\psfig{file=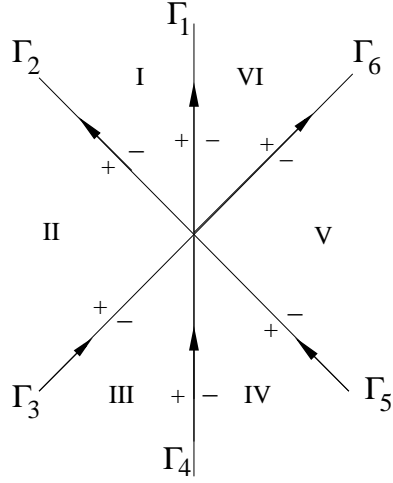,width=2.0in,angle=0}}
\vspace{0cm}
\caption{
The auxiliary contour for the parametrix at $\lb_0$.}
\label{fig2}
\end{figure}

\begin{enumerate}
\item[(a)]
    $\Psi_{\bt}$ is analytic for $\ze\in\bbc\setminus\cup_{j=1}^6\Ga_j$.
\item[(b)]
    $\Psi_{\bt}$ satisfies the following jump conditions:
    \begin{eqnarray} 
        \Psi_{\bt,+}(\zeta)
        &=&
            \Psi_{\bt,-}(\zeta)
            \begin{pmatrix}
                0 & e^{-i\pi\bt} \cr
                -e^{i\pi\bt} & 0
            \end{pmatrix},
            \qquad \mbox{for $\zeta \in \Gamma_1$,}\label{JumpPsi1}
           \\
        \Psi_{\bt,+}(\zeta)
        &=&
            \Psi_{\bt,-}(\zeta)
            \begin{pmatrix}
                0 & e^{i\pi\bt} \cr
                -e^{-i\pi\bt} & 0
            \end{pmatrix},
            \qquad \mbox{for $\zeta \in \Gamma_4$,}\\
        \Psi_{\bt,+}(\zeta)
        &=& 
            \Psi_{\bt,-}(\zeta)
            \begin{pmatrix}
                1 & 0 \cr
                e^{i\pi\bt}  & 1
            \end{pmatrix},
            \qquad \mbox{for $\zeta \in \Gamma_2\cup\Gamma_6$,} \\
                \Psi_{\bt,+}(\zeta)
        &=& 
            \Psi_{\bt,-}(\zeta)
            \begin{pmatrix}
                1 & 0 \cr
                e^{-i\pi\bt}  & 1
            \end{pmatrix},
            \qquad \mbox{for $\zeta \in \Gamma_3\cup\Gamma_5$.}
\label{JumpPsi6}
    \end{eqnarray}
\end{enumerate}

We will solve this problem explicitely in terms of the confluent
hypergeometric function, $\psi(a,c;z)$ for the
parameters $a =\beta$ and $c =1$. For the reader's convenience
we briefly sketch the standard theory of the function $\psi(a,c;z)$
in the appendix. 

Introducing the notation
\be\label{psidef}
\psi(a,z) \equiv \psi(a,1;z),
\ee
we define the following piecewise analytic matrix-valued function on the punctured
complex plane $\zeta \in {\mathbb C}\setminus \{0\}$:
\be\label{PsiConfl1}
\Psi(\zeta)=\begin{pmatrix}\psi(\bt,\ze)e^{2\pi i\bt}e^{-\ze/2} &
-\psi(1-\bt,e^{-i\pi}\ze)e^{\pi i\bt}e^{\ze/2}{\Gamma(1-\bt)\over\Gamma(\bt)}
\cr
-\psi(1+\bt,\ze)e^{\pi i\bt}e^{-\ze/2}{\Gamma(1+\bt)\over\Gamma(-\bt)} &
\psi(-\bt,e^{-i\pi}\ze)e^{\ze/2}\end{pmatrix},
\ee
$$
0<\arg \zeta < \pi,
$$
for $\Im \zeta > 0$, and 
\be\label{PsiConfl2}
\eqalign{
\Psi(\zeta)=\begin{pmatrix}\psi(\bt,\ze)e^{2\pi i\bt}e^{-\ze/2} &
-\psi(1-\bt,e^{-i\pi}\ze)e^{\pi i\bt}e^{\ze/2}{\Gamma(1-\bt)\over\Gamma(\bt)}
\cr
-\psi(1+\bt,\ze)e^{\pi i\bt}e^{-\ze/2}{\Gamma(1+\bt)\over\Gamma(-\bt)} &
\psi(-\bt,e^{-i\pi}\ze)e^{\ze/2}\end{pmatrix}\\
\times \begin{pmatrix}1 & 0\cr 2i\sin{\pi \beta} & 1\end{pmatrix},
\qquad \pi < \arg \zeta < 2\pi,}
\ee

for $\Im \zeta < 0$. \footnote{We emphasize that, while the confluent
hypergeometric function $\psi(\beta,\zeta)$ is defined on the
universal covering of the punctured $\zeta$ - plane (see the appendix), 
the function $\Psi(\zeta)$ is defined on the punctured $\zeta$ - plane
itself. Thus the indication of the
ranges of the argument of $\zeta$ in the equations (\ref{PsiConfl1})
and (\ref{PsiConfl2}),
is only relevant for the right-hand sides of the equations.}  
Denote by Roman numerals the sectors among the cuts in Figure 2.
We have the following 

\begin{proposition}\la{Prop2}
A solution to the above RHP (a), (b) for $\Psi_\bt$
is given by the following formulas:
  \begin{eqnarray} 
        \Psi_{\bt}(\zeta)
        &=&
            \Psi(\zeta),
            \qquad \mbox{for $\zeta \in I\cup III$,}\label{solmod1}
           \\
        \Psi_{\bt}(\zeta)
        &=&
            \Psi(\zeta)
            \begin{pmatrix}
                1 & 0\cr
                e^{i\pi\bt\ep} & 1
            \end{pmatrix},
            \qquad \mbox{for $\zeta \in II $,}\label{solmod2}\\
        \Psi_{\bt}(\zeta)
        &=& 
            \Psi(\zeta)
            \begin{pmatrix}
                0 & -e^{i\pi \bt} \cr
                e^{-i\pi\bt}  & 0
            \end{pmatrix},
            \qquad \mbox{for $\zeta \in IV$,} \\
                \Psi_{\bt}(\zeta)
        &=& 
            \Psi(\zeta)
            \begin{pmatrix}
                1 & -e^{-i\pi\bt\ep}\cr
                e^{i\pi\bt\ep}  & 0
            \end{pmatrix},
            \qquad \mbox{for $\zeta \in V $,}\\
              \Psi_{\bt}(\zeta)
        &=& 
            \Psi(\zeta)
            \begin{pmatrix}
                0 & -e^{-i\pi \bt} \cr
                e^{i\pi\bt}  & 0
            \end{pmatrix},
            \qquad \mbox{for $\zeta \in VI$,}
\label{modelsol}
    \end{eqnarray} 
where 
$$
\ep=\begin{cases}1,& \Im \zeta>0\cr -1,& \Im \zeta<0\end{cases}.
$$
\end{proposition}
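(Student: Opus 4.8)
The plan is to prove Proposition~\ref{Prop2} by direct verification: I will show that the piecewise-defined $\Psi_\bt$ is analytic in $\bbc\setminus\cup_{j=1}^6\Ga_j$ and that across each $\Ga_j$ it has precisely the jump prescribed in (\ref{JumpPsi1})--(\ref{JumpPsi6}). Existence then follows by construction, so no uniqueness argument is needed. The only analytic input is the standard theory of the confluent hypergeometric function recalled in the appendix: that $\psi(a,1;\zeta)$ solves Kummer's equation, has the asymptotics $\psi(a,1;\zeta)\sim\zeta^{-a}$ as $\zeta\to\infty$ in a sector about the positive real axis, and that its continuation around $\zeta=0$, together with the connection formula relating $\psi(a,1;\zeta)$ and $\psi(1-a,1;e^{-i\pi}\zeta)$, is given by explicit formulas.

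First I would observe that the entries of the matrix in (\ref{PsiConfl1}) are assembled from the pair $\psi(\bt,\zeta)$, $\psi(1+\bt,\zeta)$ and their reflected partners $\psi(-\bt,e^{-i\pi}\zeta)$, $\psi(1-\bt,e^{-i\pi}\zeta)$, with the factors $e^{\mp\zeta/2}$ putting the functions into Whittaker form. Using the differentiation and contiguous relations for $\psi$, one checks that the columns close into a single first-order linear system $\Psi'(\zeta)=A(\zeta)\Psi(\zeta)$ whose coefficient $A$ is single-valued, with a simple pole at $0$ and constant behaviour at $\infty$. Consequently $\det\Psi$ is a nonzero constant (equal to $1$ by the asymptotics at infinity), $\Psi$ is invertible away from $\zeta=0$, and continuation of $\Psi$ across any ray reduces to right multiplication by a constant matrix.

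Next I would compute the two genuine jumps of $\Psi$ on the real axis. The monodromy formula for $\psi(a,1;\zeta)$ under $\zeta\mapsto e^{2\pi i}\zeta$ yields the jump of $\Psi$ across the positive real axis, while the Kummer connection formula relating $\psi(\pm\bt,\zeta)$ to $\psi(1\mp\bt,e^{-i\pi}\zeta)$ shows that the extra factor $\left(\begin{smallmatrix}1&0\\ 2i\sin\pi\bt&1\end{smallmatrix}\right)$ inserted in (\ref{PsiConfl2}) is exactly what is required to make the two half-plane formulas agree across the negative real axis, so that $\Psi$ is continuous there and hence single-valued on $\bbc\setminus\{0\}$ as claimed in the footnote. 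This is the analytic heart of the argument and the step I expect to be most delicate: it requires carrying all the prefactors $e^{\pi i\bt}$, $e^{2\pi i\bt}$ and the ratios $\Gamma(1\pm\bt)/\Gamma(\mp\bt)$ through the connection formulas, and it must accommodate the degeneracy at $c=1$, where the two Frobenius solutions at the origin coincide and the second local solution is logarithmic.

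Finally, with the jumps of $\Psi$ in hand, the jumps of $\Psi_\bt$ follow by bookkeeping. Writing $\Psi_\bt=\Psi\,C$ with $C$ the constant matrix prescribed in each sector $I,\dots,VI$, on the four rays $\Ga_2,\Ga_3,\Ga_5,\Ga_6$ lying strictly inside a half-plane $\Psi$ is analytic, so the jump is $\Psi_{\bt,-}^{-1}\Psi_{\bt,+}=C_-^{-1}C_+$, and I would check that this product equals the required lower-triangular matrices $\left(\begin{smallmatrix}1&0\\ e^{\pm i\pi\bt}&1\end{smallmatrix}\right)$. On the two rays $\Ga_1,\Ga_4$ lying along the axis where $\Psi$ itself jumps, the jump of $\Psi_\bt$ combines $C_-^{-1}C_+$ with the $\Psi$-jump computed above; a direct matrix multiplication then reproduces the anti-diagonal jumps (\ref{JumpPsi1}) and its counterpart on $\Ga_4$. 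Since all six verifications are finite matrix identities once the connection coefficients are known, this last stage is entirely routine, and the delicate point remains the determination of those coefficients in the preceding paragraph.
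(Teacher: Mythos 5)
Your overall strategy is the same as the paper's: the constant matrices attached to sectors $I$--$VI$ reduce the jumps on $\Ga_1,\dots,\Ga_6$ to pure bookkeeping, and the whole analytic content is the behaviour of $\Psi$ across the real axis, governed by the deck-transformation relation (\ref{prop1}) and the Kummer-type connection formula (\ref{prop3}); existence by construction, with no uniqueness argument, is exactly how the paper proceeds (your preliminary ODE/Wronskian framing is correct but unnecessary for this). However, your plan misassigns where the work lies, in a way that would derail the verification if followed literally. First, $\Ga_1$ and $\Ga_4$ lie on the \emph{imaginary} axis, not the real one: the map $\zeta=n\hat f(z)$ of (\ref{zlb}) rotates by $\pi/2$, so the image of the real $z$-axis --- where the anti-diagonal jump lives --- is the imaginary $\zeta$-axis, which is interior to the half-planes in which $\Psi$ is analytic. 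Hence the anti-diagonal jumps on $\Ga_1,\Ga_4$ come purely from products of the sector constants (for instance, the matrix in (\ref{JumpPsi1}) is exactly the inverse of the sector-$VI$ constant) and do \emph{not} combine with any jump of $\Psi$, contrary to your last paragraph. What must instead be checked on the real $\zeta$-axis --- which is not part of the contour but runs through the interiors of sectors $II$ and $V$, whence the $\ep$ in their constant factors --- is that $\Psi_\bt$ has \emph{no} jump there.

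Second, you have swapped the roles of the two semi-axes. Across the negative semi-axis no connection formula is needed at all: both (\ref{PsiConfl1}) and (\ref{PsiConfl2}) are evaluated at the same branch $\arg\zeta=\pi$, so the jump of $\Psi$ is literally the inserted triangular factor,
\begin{equation*}
\Psi_+(\zeta)=\Psi_-(\zeta)\begin{pmatrix}1&0\cr -2i\sin\pi\bt&1\end{pmatrix},
\qquad \zeta<0,
\end{equation*}
and this cancels against the switch $\ep:+1\to-1$ in the sector-$II$ constant, since $e^{i\pi\bt}-2i\sin\pi\bt=e^{-i\pi\bt}$. It is the \emph{positive} semi-axis that is delicate, and there (\ref{prop1}) and (\ref{prop3}) are needed \emph{together}, not the monodromy relation alone: the boundary value from below corresponds to $\arg\zeta=2\pi$, so after the substitution $\zeta\to e^{2\pi i}(e^{-2\pi i}\zeta)$ and an application of (\ref{prop1}) one is left with terms in $\psi(1-\bt,e^{i\pi}\zeta)$ and $\psi(-\bt,e^{i\pi}\zeta)$, which must be converted back to the arguments $e^{-i\pi}\zeta$ appearing in (\ref{PsiConfl1}) via (\ref{prop3}); continuity of $\Psi_\bt$ there is equivalent to the upper-triangular jump (\ref{zeta+}) for $\Psi$, which the paper verifies entry by entry, using also $\sin\pi s=\pi/(\Gamma(s)\Gamma(1-s))$ to manage the $\Gamma$-prefactors. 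With these reassignments your computation goes through and reproduces the paper's proof; as written, the plan invokes the Kummer formula where nothing needs proving and omits it where it is indispensable.
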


\begin{proof} 
Equations (\ref{solmod1}) -- (\ref{modelsol}) yield  the correct
jumps across all the contours $\Gamma_{j}$ for any function
$\Psi(\zeta)$ which is continuous in the upper and lower $\zeta$-planes.
Let us show that the choice (\ref{PsiConfl1}) -- (\ref{PsiConfl2})
implies the continuity of $\Psi_{\bt}(\zeta)$ across the real axis.
In the case of the negative semi-axes, this is trivial. Indeed,
for $\zeta < 0$, in both (\ref{PsiConfl1})  and  (\ref{PsiConfl2})
we have to take the same value of $\arg \zeta$, namely $\arg z = \pi$,
and hence we conclude at once that 
\be
\Psi_{+}(\zeta) = \Psi_{-}(\zeta)
\begin{pmatrix}1 & 0\cr -2i\sin{\pi \beta} & 1\end{pmatrix},\quad \mbox{for}\,\,\,\,
\zeta < 0
\ee
(the real axis is oriented from the left to the right).
This relation, in view of (\ref{solmod2}), yields the continuity 
of $\Psi_{\bt}(\zeta)$ across the negative semi-axes.

It is in order to establish the continuity of $\Psi_{\bt}(\zeta)$ for $\zeta > 0$,
that we need the definition (\ref{psidef}) of $\psi(\beta,\zeta)$ as a
confluent hypergeometric function.
The following two fundamental properties of the
function $\psi(\beta,\zeta)$ will play the central role:
\begin{equation}\label{prop1}
\psi(\bt,e^{2\pi i}\zeta) = e^{-2i\pi \bt}\psi(\bt,\zeta) 
+e^{-i\pi \bt}\frac{2\pi i}{\Gamma^{2}(\bt)}
\psi(1-\bt,e^{i\pi}\zeta)e^{\zeta},
\end{equation}
and, by replacing $\beta$ with $1-\beta$ and $e^{i\pi}\ze$ with $\ze$,
\begin{equation}\label{prop3}
\psi(1-\bt,e^{i\pi }\zeta) = e^{2i\pi \bt}\psi(1-\bt,e^{-i\pi}\zeta)
-e^{i\pi \bt}\frac{2\pi i}{\Gamma^{2}(1-\bt)}
\psi(\bt,\zeta)e^{-\zeta}.
\end{equation}
These  equations hold on the universal covering of the punctured
$\zeta$-plane, i.e. for all values of $\arg \zeta$, and they follow from the
general relation (\ref{rep1}) proven,
for the reader's convenience, in the appendix.  

In the case $\zeta > 0$,  the equation $\Psi_{\bt, +}(\zeta) =
\Psi_{\bt, -}(\zeta)$ is equivalent to the equation
\begin{equation}\label{zeta+}
\Psi_{+}(\zeta) = \Psi_{-}(\zeta)
\begin{pmatrix}e^{2i\pi\bt} & -2i\sin(\pi\bt)\cr
0 & e^{-2i\pi\bt}\end{pmatrix}
\end{equation}
(the real axis is oriented from the left to the right). 
For the entry (11) we have,
\be\label{arg2pi}
\eqalign{
\Psi_{11-}(\zeta)e^{2i\pi\bt}
= \psi(\bt,\ze)e^{4\pi i\bt}e^{-\ze/2}\\
-2i\sin(\pi\bt)
\psi(1-\bt,e^{-i\pi}\ze)e^{3\pi i\bt}e^{\ze/2}{\Gamma(1-\bt)\over\Gamma(\bt)},
\qquad \arg\zeta = 2\pi,}
\ee
or
\be\label{arg0}
\eqalign{
\Psi_{11-}(\zeta)e^{2i\pi\bt}
= \psi(\bt,e^{2\pi i}\ze)e^{4\pi i\bt}e^{-\ze/2}\\
-2i\sin(\pi\bt)
\psi(1-\bt,e^{i\pi}\ze)e^{3\pi i\bt}e^{\ze/2}{\Gamma(1-\bt)\over\Gamma(\bt)},
\qquad \arg\zeta = 0.}
\ee
Eliminating, with the help of (\ref{prop1}), the function
$\psi(\bt,e^{2\pi i}\ze)$ from the last equation, and
taking into account the classical relation
$\sin\pi s= \pi/({\Gamma(s)\Gamma(1-s)})$,
we obtain that
\be\eqalign{
\Psi_{11-}(\zeta)e^{2i\pi\bt}
= \psi(\bt,\ze)e^{2\pi i\bt}e^{-\ze/2} 
+ e^{3i\pi \bt}\frac{2\pi i}{\Gamma^{2}(\bt)}
\psi(1-\bt,e^{i\pi}\zeta)e^{\zeta/2}\\
-2i\sin(\pi\bt)
\psi(1-\bt,e^{i\pi}\ze)e^{3\pi i\bt}e^{\ze/2}{\Gamma(1-\bt)\over\Gamma(\bt)}
\equiv \psi(\bt,\ze)e^{2\pi i\bt}e^{-\ze/2},\qquad \arg\zeta = 0,}\label{11}
\ee
which, in view of the definition (\ref{PsiConfl1}), coincides with $\Psi_{11+}(\ze)$
for $\zeta >0$. 

The entry (12) in the r.h.s. of (\ref{zeta+}) reads,
$$
-2i\sin(\pi\bt)\Psi_{11-}(\ze) + \Psi_{12-}(\ze)e^{-2i\pi\bt} 
$$
\be\label{121}
= -2i\sin(\pi\bt) \psi(\bt,\ze)e^{-\ze/2} 
-\psi(1-\bt,e^{i\pi}\ze)e^{-\pi i\bt}e^{\ze/2}{\Gamma(1-\bt)\over\Gamma(\bt)},
\qquad \arg \ze = 0.
\ee
where we used expression (\ref{11})
for $\Psi_{11-}(\ze)$,  and the entry  $\Psi_{12-}(\ze)$ was
taken from (\ref{PsiConfl2}) with the substitution
$\zeta \to e^{2\pi i}\zeta$ (as in the transition from (\ref{arg2pi})
to (\ref{arg0})).

Substituting (\ref{prop3}) for $\psi(1-\bt,e^{i\pi}\ze)$, we obtain that 
(\ref{121}) equals
\be
-\frac{\Gamma(1-\bt)}{\Gamma(\bt)}e^{i\pi\bt}e^{\ze/2}
\psi(1-\bt,e^{-i\pi}\ze),
\qquad \arg \ze = 0,
\ee
which is  $\Psi_{12+}(z)$ for $\zeta >0$.

Consider now the entry (21) of equation (\ref{zeta+}).
As for the entry (11), we first write 
the r.h.s. of the entry (21) as
\be\label{211}
\eqalign{
\Psi_{21-}(\zeta)e^{2i\pi\bt}
= -\psi(1+\bt,e^{2\pi i}\ze)e^{3\pi i\bt}e^{-\ze/2}
\frac{\Gamma(1+\bt)}{\Gamma(-\bt)}\\
+2i\sin(\pi\bt)
\psi(-\bt,e^{i\pi}\ze)e^{2\pi i\bt}e^{\ze/2},
\qquad \arg\zeta = 0.}
\ee
Now use (\ref{prop1}), with $\beta$ replaced by $1+\bt$
to eliminate  $\psi(1+\bt,e^{2\pi i}\ze)$:
$$
\Psi_{21-}(\zeta)e^{2i\pi\bt}
= - e^{3\pi i\bt}e^{-\ze/2}
\frac{\Gamma(1+\bt)}{\Gamma(-\bt)}
\Bigl( \psi(1+\bt,\ze)e^{-2\pi i\bt}
$$
\be
- e^{-i\pi \bt}\frac{2\pi i}{\Gamma^{2}(1+\bt)}
\psi(-\bt,e^{i\pi}\zeta)e^{\zeta}\Bigr)
+2i\sin(\pi\bt)
\psi(-\bt,e^{i\pi}\ze)e^{2\pi i\bt}e^{\ze/2}
\ee
$$
=- e^{3\pi i\bt}e^{-\ze/2}
\frac{\Gamma(1+\bt)}{\Gamma(-\bt)}
\Bigl( \psi(1+\bt,\ze)e^{-2\pi i\bt}
- e^{-i\pi \bt}\frac{2\pi i}{\Gamma^{2}(1+\bt)}
\psi(-\bt,e^{i\pi}\zeta)e^{\zeta}\Bigr)
$$
\be
-\frac{2\pi i}{\Gamma(1+\bt)\Gamma(-\bt)}
\psi(-\bt,e^{i\pi}\ze)e^{2\pi i\bt}e^{\ze/2}
\ee
\be\label{212}
\equiv - e^{\pi i\bt}e^{-\ze/2}
\frac{\Gamma(1+\bt)}{\Gamma(-\bt)}
 \psi(1+\bt,\ze),\qquad \arg\zeta = 0,
\ee
which, in view of the definition (\ref{PsiConfl1}), coincides with $\Psi_{21+}(z)$
for $\zeta >0$. 

Finally, for the entry (22) of (\ref{zeta+}) we have
$$
-2i\sin(\pi\bt)\Psi_{21-}(\ze) + \Psi_{22-}(\ze)e^{-2i\pi\bt} 
$$
\be\label{221}
= 2i\sin(\pi\bt)e^{-\pi i\bt}e^{-\ze/2}
\frac{\Gamma(1+\bt)}{\Gamma(-\bt)}
 \psi(1+\bt,\ze) 
+\psi(-\bt,e^{i\pi}\ze)e^{-2\pi i\bt}e^{\ze/2},
\qquad \arg \ze = 0,
\ee
where   we used   expression (\ref{212})
for $\Psi_{21-}(\ze)$,  and the entry  $\Psi_{22-}(\ze)$ was
taken from (\ref{PsiConfl2}) with the substitution
$\zeta \to e^{2i\pi}\zeta$ (cf. (\ref{121})).
Applying (\ref{prop3}) with $\beta$ replaced by $-\beta$ to
$\psi(-\bt,e^{i\pi}\ze)$, we obtain that (\ref{221}) equals
\be
e^{\ze/2}\psi(-\bt,e^{-i\pi}\ze), \qquad \arg \ze = 0,
\ee
which is  $\Psi_{22+}(z)$ for $\zeta >0$.
This completes the proof of Proposition \ref{Prop2}. 
\end{proof}

We will now show that this solution can be matched 
with $P_\infty(z)$ 
on the boundary $\partial U_{\lb_0}$ for large $n$.
We need first to compute the asymptotic expansion of $\Psi_\bt(\ze)$ for
$\ze\to\infty$. For that we use the classical result \cite{BE} (see also the appendix)
for the confluent hypergeometric function:
\be\label{asclass}
\psi(a,x)=x^{-a}-a^2 x^{-a-1}+O(x^{-a-2}),\qquad |x|\to\infty,\qquad
-3\pi/2<\arg x<3\pi/2.
\ee
We can apply this result  to (\ref{PsiConfl1})
to obtain the asymptotics of the 
solution in sector $I$. The ``correct'' triangular structure of the 
right matrix factor in (\ref{solmod2}) implies
that these asymptotics 
remain the same in the whole second quadrant, namely:
\be\la{Psias}\eqalign{
\Psi_\bt(\ze)=
\left[ I+{1\over\ze}\begin{pmatrix}-\bt^2& -f(\bt)e^{i\pi\bt}\cr 
f(-\bt)e^{-i\pi\bt}& \bt^2\end{pmatrix}+
O(\ze^{-2})\right]\\
\times
\ze^{-\bt\si_3}\begin{pmatrix}e^{2\pi i\bt} & 0\cr 0 & e^{-i\pi\bt}\end{pmatrix}
e^{-\ze\si_3/2},\qquad \ze\to\infty,\qquad 
\frac{\pi}{2}\leq\arg \zeta \leq \pi,}
\ee
where we denoted
\be
f(a)=-{\Gamma(1-a)\over\Gamma(a)}.
\ee
Similar considerations in sector $VI$ and in the upper half of
sector $V$, where we still can obtain the asymptotics
by direct substitution of (\ref{asclass}) into (\ref{PsiConfl1}),
yield
\be\la{Psias2}\eqalign{
\Psi_\bt(\ze)=
\left[ I+{1\over\ze}\begin{pmatrix}-\bt^2& -f(\bt)e^{i\pi\bt}\cr 
f(-\bt)e^{-i\pi\bt}& \bt^2\end{pmatrix}+
O(\ze^{-2})\right]\ze^{-\bt\si_3}\begin{pmatrix}0 & -e^{i\pi\bt}\cr 1 & 0\end{pmatrix}
e^{\ze\si_3/2},\\  \ze\to\infty,\qquad 0\leq \arg \zeta \leq \frac{\pi}{2}.}
\ee

Before discussing the asymptotics of $\Psi_\bt(\ze)$ in the sectors
in the lower half-plane, we need to
perform the deck transformation, $\ze \to e^{-2\pi i}\ze$, with the first column 
of the first matrix factor in
the r.h.s. of (\ref{PsiConfl2}). Using (\ref{prop1}),
$$
\psi(\beta,\zeta)e^{2\pi i\bt}e^{-\zeta/2} = 
\psi(\beta,e^{2\pi i}(e^{-2\pi i}\zeta))e^{2\pi i\bt}e^{-\zeta/2}
$$
$$
= \Bigl(e^{-2i\pi \bt}\psi(\bt,e^{-2\pi i}\zeta) 
+e^{-i\pi \bt}\frac{2\pi i}{\Gamma^{2}(\bt)}
\psi(1-\bt,e^{-i\pi}\zeta)e^{\zeta}\Bigr)e^{2\pi i\bt}e^{-\zeta/2}
$$
$$
= \psi(\bt,e^{-2\pi i}\zeta)e^{-\zeta/2} 
+e^{i\pi \bt}\frac{2\pi i}{\Gamma^{2}(\bt)}
\psi(1-\bt,e^{-i\pi}\zeta)e^{\zeta/2}
$$
\be\label{deck1}
= \psi(\bt,e^{-2\pi i}\zeta)e^{-\zeta/2} 
+2i\sin(\pi\bt)
\psi(1-\bt,e^{-i\pi}\zeta)e^{i\pi \bt}e^{\zeta/2}
\frac{\Gamma(1-\bt)}{\Gamma(\bt)}
\ee
and
$$
-\psi(1+\beta,\zeta)e^{\pi i\bt}e^{-\zeta/2}\frac{\Gamma(1+\bt)}{\Gamma(-\bt)}
 = 
 -\psi(1+\beta,e^{2\pi i}(e^{-2\pi i}\zeta))e^{\pi i\bt}e^{-\zeta/2}\frac{\Gamma(1+\bt)}{\Gamma(-\bt)}
 $$
$$
=- \Bigl(e^{-2i\pi \bt}\psi(1+\bt,e^{-2\pi i}\zeta) 
-e^{-i\pi \bt}\frac{2\pi i}{\Gamma^{2}(1+\bt)}
\psi(-\bt,e^{-i\pi}\zeta)e^{\zeta}\Bigr)e^{\pi i\bt}
e^{-\zeta/2}\frac{\Gamma(1+\bt)}{\Gamma(-\bt)}
$$
\be\label{deck2}
= -\psi(1+\bt,e^{-2\pi i}\zeta)e^{-\pi i\bt}
e^{-\zeta/2}\frac{\Gamma(1+\bt)}{\Gamma(-\bt)} 
-2i\sin(\pi\bt)\psi(-\bt,e^{-i\pi}\zeta)e^{\zeta/2}.
\ee

Equations (\ref{deck1}) and (\ref{deck2}) allow us to rewrite
(\ref{PsiConfl2}) in the form
\be\label{PsiConfl3}
\Psi(\zeta)=\begin{pmatrix}\psi(\bt,e^{-2\pi i}\ze)e^{-\ze/2} &
-\psi(1-\bt,e^{-i\pi}\ze)e^{\pi i\bt}e^{\ze/2}{\Gamma(1-\bt)\over\Gamma(\bt)}
\cr
-\psi(1+\bt,e^{-2\pi i}\ze)e^{-\pi i\bt}e^{-\ze/2}{\Gamma(1+\bt)\over\Gamma(-\bt)} & 
\psi(-\bt,e^{-i\pi}\ze)e^{\ze/2}\end{pmatrix},
\ee
$$
\pi < \arg\ze < 2\pi.
$$
Now the asymptotic result (\ref{asclass}) can be applied directly for the 
whole range $\pi \leq \arg\ze \leq 2\pi$, and as for (\ref{Psias}) and (\ref{Psias2}), we arrive at the
following asymptotics for the function $\Psi_\bt(\ze)$
in the lower half-plane:
\be\la{Psias3}\eqalign{
\Psi_\bt(\ze)=
\left[ I+{1\over\ze}\begin{pmatrix}-\bt^2& -f(\bt)e^{i\pi\bt}\cr 
f(-\bt)e^{-i\pi\bt}& \bt^2\end{pmatrix}+
O(\ze^{-2})\right]\\
\times\ze^{-\bt\si_3}
\begin{pmatrix}e^{2\pi i\bt} & 0\cr 0 & e^{-i\pi\bt}\end{pmatrix}
e^{-\ze\si_3/2},\qquad \ze\to\infty,\qquad 
\pi \leq\arg \zeta \leq \frac{3\pi}{2},}
\ee
and
\be\la{Psias4}\eqalign{
\Psi_\bt(\ze)=
\left[ I+{1\over\ze}\begin{pmatrix}-\bt^2& -f(\bt)e^{i\pi\bt}\cr 
f(-\bt)e^{-i\pi\bt}& \bt^2\end{pmatrix}+
O(\ze^{-2})\right]\\
\times
\ze^{-\bt\si_3}\begin{pmatrix}0 & -e^{3\pi i\bt}\cr e^{-2\pi i\bt} & 0\end{pmatrix}
e^{\ze\si_3/2},\qquad  \ze\to\infty,\qquad \frac{3\pi}{2}\leq \arg \zeta \leq 2\pi.}
\ee

Relations (\ref{Psias}), (\ref{Psias2}),(\ref{Psias3}), and (\ref{Psias4}) give
a complete description of the asymptotic behavior of the function $\Psi_\bt(\ze)$
in the neighborhood of $\ze = \infty$. In particular, for the left and the right
half-planes we have that
\be\la{Psias5}\eqalign{
\Psi_\bt(\ze)=
\left[ I+{1\over\ze}\begin{pmatrix}-\bt^2& -f(\bt)e^{i\pi\bt}\cr 
f(-\bt)e^{-i\pi\bt}& \bt^2\end{pmatrix}+
O(\ze^{-2})\right]\ze^{-\bt\si_3}\begin{pmatrix}e^{2\pi i\bt} & 0\cr 0 & e^{-i\pi\bt}\end{pmatrix}
e^{-\ze\si_3/2},\\ \ze\to\infty,\qquad 
\frac{\pi}{2} \leq\arg \zeta \leq \frac{3\pi}{2},}
\ee
for $\Re \ze\leq 0$ half-plane (upper half-plane in $z$ variable),
and
\be\la{Psias6}\eqalign{
\Psi_\bt(\ze)=
\left[ I+{1\over\ze}\begin{pmatrix}-\bt^2& -f(\bt)e^{i\pi\bt}\cr 
f(-\bt)e^{-i\pi\bt}& \bt^2\end{pmatrix}+
O(\ze^{-2})\right]\ze^{-\bt\si_3}\begin{pmatrix}0 & -e^{\pi i\bt}\cr 1 & 0\end{pmatrix}
e^{\ze\si_3/2},\\  \ze\to\infty,\qquad - \frac{\pi}{2}\leq \arg \zeta \leq \frac{\pi}{2}.}
\ee
for $\Re \ze\geq 0$ half-plane (lower half-plane in $z$ variable).
Substituting these asymptotics into the matching condition (\ref{match}):
\be
P_{\lb_0}(z)P^{-1}_\infty(z)=
E(z)\Psi(\ze)e^{-n\phi(z)\si_3}P^{-1}_\infty(z)=
I+o(1),
\ee
we obtain 
\be\la{E1}
E(z)=P_\infty(z)\ze^{\bt\si_3}e^{n\phi_+(\lb_0)\si_3}
\begin{pmatrix}e^{-2\pi i\bt} &0\cr 0 & e^{i\pi\bt}\end{pmatrix},
\qquad 
\frac{\pi}{2}\leq \arg \zeta \leq \frac{3\pi}{2},
\ee
for $\Im z\geq 0$, and 
\be\la{E2}
E(z)=P_\infty(z)\ze^{-\bt\si_3}e^{-n\phi_+(\lb_0)\si_3}
\begin{pmatrix}0 &1\cr -e^{-i\pi\bt}& 0\end{pmatrix},
\qquad
- \frac{\pi}{2}\leq \arg \zeta \leq \frac{\pi}{2},
\ee
for $\Im z\leq 0$.
A direct check using the jump condition for $P_\infty(z)$ 
shows that $E(z)$ has no jump across the real axis. Moreover, 
$E(z)$ has no singularity at $z=\lb_0$ as easily follows from the expansion
\be\label{Doverze}
{\mathcal{D}(z)\over \ze^\bt}=(8n)^{-\bt}e^{-i\pi\bt}(1-\lb_0^2)^{-3\bt/2}
(1+o(1)),\qquad z\rightarrow \lb_0,\qquad \Im z>0,
\ee
which is derived from (\ref{Dlb}).
Therefore, $E(z)$
is an analytic function in $U_{\lb_0}$.
This completes construction of the parametrix at $\lb_0$:
it is given by the formulas 
(\ref{Plb0},\ref{P1},\ref{E1},\ref{E2}) and  the formulas 
(\ref{solmod1})--(\ref{modelsol}) of Proposition \ref{Prop2}.
In particular, in the pre-image $z(I)$ of the sector $I$ of the $\ze$-plane,
we have:
\be\eqalign{
P_{\lb_0}(z)=P_\infty(z)\ze^{\bt\si_3}\times\\
\begin{pmatrix}\psi(\bt,\ze) &
\psi(1-\bt,e^{-i\pi}\ze)f(\bt)e^{-i\pi\bt}e^{2n\phi_+(\lb_0)}\cr
\psi(1+\bt,\ze)f(-\bt)e^{2i\pi\bt}e^{-2n\phi_+(\lb_0)}&
\psi(-\bt,e^{-i\pi}\ze)e^{i\pi\bt}\end{pmatrix},\quad
z\in z(I),}
\ee
where
\be
f(a)=-{\Gamma(1-a)\over\Gamma(a)},\qquad
\phi_+(\lb_0)=2i\int_{\lb_0}^1\sqrt{1-x^2}dx,\qquad
\ze=4in\int_{\lb_0}^z\sqrt{1-x^2}dx.
\ee
Note that the parametrix $P_{\lb_0}(z)$ correctly reproduces the type 
of singularity of the solution of the RHP of Section 2 at $\lb_0$. Indeed,
on the one hand, it is clear from (\ref{RHM}) that the solution has a 
logarithmic singularity at $z=\lb_0$ (or $\ze=0$). 
On the other hand, the expansion
of the function $\psi(a,1,\ze)$ at zero is known to be \cite{BE}
\be\la{psi0}
\psi(a,1,\ze)= -{1 \over \Gamma(a)}
\left(\ln \ze +{\Gamma'(a)\over \Gamma(a)}-2C_\Gamma\right)+O(\ze\ln\ze),
\qquad \ze\to 0,
\ee
where $C_\Gamma=0.5772\dots$ is Euler's constant.
In fact, $P_{\lb_0}(z)$ reproduces the singularity of the solution at 
$\lb_0$ exactly, in the sense that $S(z)P_{\lb_0}(z)^{-1}$ is analytic
in a neighborhood of $\lb_0$. The latter fact is a consequence of the absence of
jumps and the fact that the singularity can be at most logarithmic.

We can extend
(\ref{match}) into a full asymptotic series in $n$.
For our calculations we need to know the first correction term:
\be\la{alb}\eqalign{
P_{\lb_0}(z)P_\infty^{-1}(z)=I+\De_1(z)+O(1/n^{2-2|\Re\beta|}),\\
\De_1(z)={1\over\ze}P_\infty(z)\begin{pmatrix}-\bt^2& 
-\ze^{2\bt}e^{2n\phi_+(\lb_j)-2\pi i\bt}f(\bt)\cr
\ze^{-2\bt}e^{-2n\phi_+(\lb_j)+2\pi i\bt}f(-\bt)& \bt^2\end{pmatrix}
P_\infty^{-1}(z),\\
\qquad z\in\partial z(I),}
\ee
where $\partial z(I)$ is the 
part of $\partial U_{\lb_0}$ whose $\ze$-image is in $I$.
As the calculation for the other sectors shows,
this expression for $\De_1(z)$ extends by analytic continuation 
to the whole boundary
$\partial U_{\lb_0}$ (cf. \cite{KV,V,Kr2}). 
Moreover, it gives rise to a meromorphic function in 
a neighborhood of $U_{\lb_0}$ with a simple pole at $z=\lb_0$.
The singularity given by $\ze^\bt$ actually cancels with
that of $\mathcal{D}(z)$ from $P_\infty$ (see (\ref{Doverze})).
The error term $O(1/n^{2-2|\Re\beta|})$ in (\ref{alb}) is uniform in $z$ 
on $\partial U_{\lb_0}$
and in $\bt$ in bounded sets of the strip $-1/2<\Re\bt<1/2$.


\subsection{Parametrices at $z=\pm1$}

We now construct parametrices in the remaining regions $U_{1}$ and $U_{-1}$.
These are obtained by a slight generalization
of the results of \cite{Dstrong}, which can be viewed as the case $\om(z)=1$.
The construction is identical with the corresponding one in \cite{Kr2}.

We are looking for an analytic matrix-valued function in $U_1$ which has
the same jump
relation as $S(z)$ there and satisfies the matching 
condition on the boundary:
\be\la{match2}
P_1(z)P_\infty^{-1}(z)=I+O(1/n).
\ee

The solution is:
\be\eqalign{
P_1=E(z)Q(\xi)e^{-n\phi(z)\si_3}\om(z)^{-\si_3/2},\\
E(z)=P_\infty(z)\om(z)^{\si_3/2}e^{i\pi\si_3/4}\sqrt{\pi}
\begin{pmatrix}1&-1\cr 1&1\end{pmatrix}\xi^{\si_3/4}e^{-\pi i/12},}
\ee
and $Q(\xi)$ is given by the expression (7.9) of \cite{Dstrong}
in terms of Airy functions
(in the notation of \cite{Dstrong}, $Q(\xi)=\Psi^\si(\xi)$).
In these formulas
\be
\xi(z)=\left( {3\over 2}n e^{-i\pi}\phi(z)\right)^{2/3}
\ee
is an analytic function 
in a neighborhood of $z=1$ with a cut in the
interval $(-1,1)$, and
\be
\xi(z)=2 n^{2/3} (z-1) \left(1+{1\over 10} (z-1)+ O((z-1)^2)\right).
\ee

The argument of the Airy function on $\partial U_1$ is uniformly large, 
so we can expand it into an 
asymptotic series and proceed the same way as for $\partial U_{\lb_0}$. As a result
we have the matching condition (\ref{match2}) extended to a full asymptotic 
expansion in the inverse powers of $n$.
We shall need only the first 2 terms:
\be\la{a1}\eqalign{
P_1(z)P_\infty^{-1}(z)=I+\De_1(z)+O(1/n^2),\\
\De_1(z)=P_\infty(z)\om(z)^{\si_3/2}e^{\pi i\si_3/4}
{1\over 12}\begin{pmatrix}1/6 & 1\cr -1& -1/6\end{pmatrix}
e^{-\pi i\si_3/4}\om(z)^{-\si_3/2}P_\infty^{-1}(z)
{3\over 2}\xi^{-3/2},\\
z\in \partial U_1.}
\ee

The function $\De_1(z)$ is meromorphic in a neighborhood of $U_1$ with 
a second order pole at $z=1$.


The argument for the parametrix in $U_{-1}$ is similar. 
We have:
\be\la{P-1}\eqalign{
P_{-1}=E(z)\si_3 Q(e^{-i\pi}\xi)\si_3 
e^{-n\tilde\phi(z)\si_3}\om(z)^{-\si_3/2},\\
E(z)=P_\infty(z)\om(z)^{\si_3/2}e^{i\pi\si_3/4}\sqrt{\pi}
\begin{pmatrix}1&1\cr -1&1\end{pmatrix}(e^{-i\pi}\xi)^{\si_3/4}e^{-\pi i/12},}
\ee
Here
\be\eqalign{
\xi(z)=e^{-i\pi}\left( {3\over 2}n \tilde\phi(z)\right)^{2/3},\\
\tilde\phi(z)=
\begin{cases}h(z)/2-i\pi=e^{3i\pi/2}\pi\int_{-1}^z\psi(y)dy,& 
\mbox{for}\quad \Im z>0,\cr
e^{i\pi}h(z)/2+i\pi=
e^{5i\pi/2}\pi\int_{-1}^z\psi(y)dy,& 
\mbox{for}\quad \Im z<0
\end{cases}.}
\ee
For $z\to -1$,
\be
\xi(z)=2 n^{2/3} (1+z) \left(1-{1\over 10} (1+z)+ O((1+z)^2)\right).
\ee

The first 2 terms in the matching condition:
\be\la{am1}\eqalign{
P_{-1}(z)P_\infty^{-1}(z)=I+\De_1(z)+O(1/n^2),\\
\De_1(z)=P_\infty(z)\om(z)^{\si_3/2}e^{\pi i\si_3/4}
{1\over 12}\begin{pmatrix}1/6 & -1\cr 1& -1/6\end{pmatrix}
e^{-\pi i\si_3/4}\om(z)^{-\si_3/2}P_\infty^{-1}(z)\\
\times{3\over 2}(e^{-i\pi}\xi)^{-3/2},\qquad
z\in\partial U_{-1}.}
\ee

As in (\ref{alb}), the error terms in (\ref{a1}) (resp., (\ref{am1})) are uniform for 
all $z\in \partial U_1$ (resp., $z\in \partial U_{-1}$) and $\bt$ in a bounded set.


\subsection{Solving the RHP}

Let
\be
R(z)=\begin{cases}S(z)P_\infty^{-1}(z),& 
z\in U_\infty\setminus\Si,\cr
S(z)P_{\lb_0}^{-1}(z),& 
z\in U_{\lb_0}\setminus\Si,\cr
S(z)P_1^{-1}(z),&
z\in U_1\setminus\Si,\cr
S(z)P_{-1}^{-1}(z),&
z\in U_{-1}\setminus\Si.\end{cases}\la{Rb}
\ee
It is easy to verify that this function has jumps only on 
$\partial U_{\pm1}$, $\partial U_{\lb_0}$, and parts of 
$\Si_j$, $\bbr\setminus [-1,1]$
lying outside the neighborhoods $U_{\pm 1}$, $U_{\lb_0}$ (we
denote these parts without the end-points $\Si^\mathrm{out}$). 
The contour is shown in Figure 3. Outside of it, as a standard argument
shows, $R(z)$ is analytic.
Note that $R(z)=I+O(1/z)$ as $z\to\infty$. 

\begin{figure}
\centerline{\psfig{file=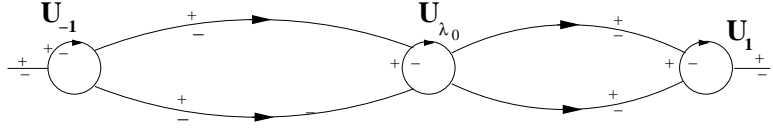,width=4.0in,angle=0}}
\vspace{0cm}
\caption{ 
Contour for the $R$-Riemann-Hilbert problem.}
\label{fig3}
\end{figure}

The jumps are as follows:
\be\eqalign{
R_+(x)=R_-(x)P_{\infty}(x)\begin{pmatrix}1&\om(x)e^{n(g_+(x)+g_-(x)-2x^2-l)}
\cr 0&1\end{pmatrix}
P_{\infty}(x)^{-1},\\ x\in\bbr\setminus [-1-\delta,1+\delta],\\
R_+(x)=R_-(x)P_{\infty}(x)\begin{pmatrix}1&0\cr \om(x)^{-1}
e^{\mp n h(x)}&1\end{pmatrix}
P_{\infty}(x)^{-1},\\ x\in\Si_k^\mathrm{out},
\qquad k=1,\dots,4,\\
\mbox{where the plus sign in the exponent is on 
$\Sigma_{2j}^\mathrm{out}$, and minus, on $\Sigma_{2j-1}^\mathrm{out}$, $j=1,2$,}\\
R_+(x)=R_-(x)P_{\lb_0}(x)P_\infty (x)^{-1},\qquad x\in \partial U_{\lb_0}\setminus
\mbox{\{ intersection points\}},\\
R_+(x)=R_-(x)P_{\pm 1}(x)P_\infty (x)^{-1},\qquad x\in \partial U_{\pm 1}\setminus
\mbox{\{ intersection points\}}.}\la{Rj}
\ee
Here $\delta$ is the radius of $U_1$ and $U_{-1}$.
The jump matrix on $\Si^\mathrm{out}$ can be 
estimated uniformly in $\bt$ as $I+O(\exp(-\ep n|x|))$, where 
$\ep$ is a positive constant.
The jump matrices on $\partial U_{\lb_0,\pm1}$ admit a uniform expansion in the
inverse powers of $n$ multiplied by $n^{2|\Re\bt|}$ (the first 2 terms of which are 
given by (\ref{alb}), (\ref{a1}), and (\ref{am1})):
\be\la{Deas}
I+\De_1(z)+\De_2(z)+\dots+\De_{k}(z)+O(n^{-k-1+2|\Re\beta|}).
\ee
Every $\De_j$ is of order $n^{-j+2|\Re\beta|}$.
(The above expressions give us the explicit form of $\De_1(z)$ in each of 
the neighborhoods.)

The solution to the above RHP is given by a standard analysis (see, e.g.,
\cite{Dstrong, DIKZ, DIK2, Kr2}).
We obtain that
\be\la{Ras}
R(z)=I+\sum_{j=1}^{k}R_j(z)+O(n^{-k-1+2|\Re\beta|}),
\qquad R_j(z)=O(n^{-j+2|\Re\beta|}),\qquad j=1,2\dots
\ee
uniformly for all $z$ and for $\bt$ in  bounded sets 
of the strip $-1/2<\Re\bt<1/2$. 
The expressions for $R_j$ are computed recursively. 
We shall need only the first one:
\be
R_1(z)={1\over 2\pi i}\int_{\partial U}
{\De_1(x)dx\over x-z},\qquad \partial U=\partial U_1\cup\partial U_{-1}\cup\partial U_{\lb_0}.
\la{plem}
\ee
The contours are traversed in the negative direction.


\subsection{Asymptotics for the polynomials and recurrence 
coefficients.}

The asymptotic solution given above allows us to obtain various explicit
formulas for asymptotics of the polynomials orthogonal w.r.t. the weight (\ref{w}).
First of all let us note that such polynomials exist for large enough $n$, i.e.,
the normalizing coefficients $h_k$ are nonzero. Indeed, tracing the transformations
$Y\rightarrow U\rightarrow T\rightarrow S$ of the RHP, we obtain from (\ref{RHM})
\be\la{hnm1}
\eqalign{
h_{n-1}^{-1}=\lim_{z\to\infty}{iY_{21}(\sqrt{2n}z)\over 2\pi (\sqrt{2n}z)^{n-1}}=
\lim_{z\to\infty}{i\sqrt{2n}\over 2\pi z^{n-1} (2n)^n}T_{21}(z)e^{n(g(z)-l)}\\
=\lim_{z\to\infty}{i\sqrt{2n}\over 2\pi (2n)^n}z S_{21}(z)e^{-nl}.}
\ee
Note that
$S(z)=(I+O(1/n^{1-2|\Re\beta|}))P_\infty(z)$; and a simple calculation shows that
\be
\lim_{z\to\infty}zP_{\infty,21}(z)={-i\over 2\mathcal{D}_\infty^2}.
\ee
Therefore, substitution of the expressions for $l$ and $\mathcal{D}_\infty$ into 
(\ref{hnm1}) gives
\be\la{has}
h_n^{-1}=
{e^n 2^n \sqrt{2n}\over 4\pi n^n}e^{-2i\bt\arcsin\lb_0}(1+O(1/n^{1-2|\Re\beta|})).
\ee

We shall now calculate the recurrence coefficients in (\ref{rr}) and prove 
Theorem \ref{Th2}.
Multiplying the recurrence by $\hat p_n(x)w(x)$ and integrating, we obtain 
by orthogonality
\be
A_n h_n=\int_{-\infty}^\infty \hat p_n^2(x)x w(x)dx=
\int_{-\infty}^{\mu_0} \hat p_n^2(x)x e^{-x^2}e^{i\pi\bt}dx+
\int_{\mu_0}^\infty  \hat p_n^2(x)x e^{-x^2}e^{-i\pi\bt}dx.
\ee
Integrating each of these integrals by parts, combining the resulting integrals
together,  and using the orthogonality of $p_n(x)$ and $p'_n(x)$, we obtain
\be\la{A}
A_n=-h_n^{-1} \hat p_n^2(\mu_0) e^{-\mu_0^2} \sinh(i\pi\bt).
\ee 

In terms of the RHP,
\be\la{pn}
\hat p_n(\lb_0\sqrt{2n})=Y_{11}(\lb_0\sqrt{2n})=
(2n)^{n/2} U_{11}(\lb_0).
\ee
In the region $z(I)$ of the $z$-plane, 
\be
U_{11}(\lb_0)=S_{11}(\lb_0)e^{ng_+(\lb_0)}+
S_{12}(\lb_0)e^{ng_-(\lb_0)}e^{i\pi\bt}.
\ee
The main term is obtained here by substituting $P_{\lb_0}(\lb_0)=
\lim_{z\to\lb_0}P_{\lb_0}(z)$ for $S$. Here $z$ tends to $\lb_0$ along a path
in $z(I)$. Expanding $P_{\lb_0}(z)$ we obtain that the main term of $U_{11}(\lb_0)$
equals
\be\la{Up}\eqalign{
\lim_{z\to\lb_0}{1\over 2}
\left(
(a+a^{-1})\mathcal{D}_\infty {\ze^\bt\over \mathcal{D}(z)}
\left[\psi(\bt,\ze)+\psi(1-\bt,e^{-i\pi}\ze)f(\bt)\right]e^{ng_+(\lb_0)}\right.
\\
\left.
-i(a-a^{-1})\mathcal{D}_\infty { \mathcal{D}(z)\over\ze^\bt }
\left[\psi(-\bt,e^{-i\pi}\ze)+\psi(1+\bt,\ze)f(-\bt)\right]
e^{ng_-(\lb_0)+2\pi i\bt}\right).
}
\ee

Recall that $z\to\lb_0$ corresponds to $\ze\to 0$.
In this limit 
the ratio $\mathcal{D}(z)/\ze^\bt$ is given by (\ref{Doverze}), and the behavior of
$\psi$ is determined by (\ref{psi0}). From the latter formula, as $z\to\lb_0$,
\be
\psi(\bt,\ze)+\psi(1-\bt,e^{-i\pi}\ze)f(\bt)=
e^{-i\pi\bt}\Gamma(1-\bt)
+O((z-\lb_0)\ln(z-\lb_0)),
\ee
\be
\psi(-\bt,e^{-i\pi}\ze)+\psi(1+\bt,\ze)f(-\bt)=
e^{-i\pi\bt}\Gamma(1+\bt)
+O((z-\lb_0)\ln(z-\lb_0)).
\ee
A simple computation gives
\be
g_+(\lb_0)=\lb_0^2-{1\over2}-\ln 2 +\phi_+(\lb_0), \qquad
\phi_+(\lb_0)=i\pi\int_{\lb_0}^1\psi(s)ds.
\ee
Substituting these equations into (\ref{Up}) yields
\be\la{Y11}\eqalign{
Y_{11}(\lb_0\sqrt{2n})=(2n)^{n/2} U_{11}(\lb_0)=-(2n)^{n/2}
\mathcal{D}_\infty
2^{-n-1} e^{n(\lb_0^2-1/2)}\bt\\
\times
[(a_+(\lb_0)+a_+(\lb_0)^{-1})F^{(+)}+
i(a_+(\lb_0)-a_+(\lb_0)^{-1})F^{(-)}](1+O(1/n^{1-2|\Re\beta|})),\\
F^{(\pm)}=
(8n)^{\pm\bt} 
(1-\lb_0^2)^{\pm3\bt/2}\Gamma(\mp\bt)
e^{\pm n\phi_+(\lb_0)},\qquad \mathcal{D}_\infty=e^{i\bt\arcsin\lb_0}.}
\ee
By (\ref{A}), (\ref{pn}), this proves the formulas (\ref{An}) for $A_n$ in 
Theorem \ref{Th1}. 
The particular cases (\ref{An2}), (\ref{An3}) of the theorem follow from 
the simple identities:
\[
|\Gamma(\pm i\ga)|^2={\pi\over \ga\sinh\pi\ga},
\qquad \arg\Gamma(-i\ga)=-\arg\Gamma(i\ga).
\qquad \ga\in\bbr\setminus\{0\}.
\]

In order to derive the formulas for $B_n$ in Theorem \ref{Th1} and to
prove Theorem \ref{Th2} using Proposition \ref{Prop1} in the next section, 
we shall need more precise
asymptotics for the coefficients $\ka_n$,
and also those for the  coefficients $\bt_n$, and $\ga_n$ 
of the polynomial $p_n(z)$.
As usual,
we compute them investigating the limit $z\to\infty$ of $Y(z)$. 
By (\ref{RHM}),
\be\la{YU1}
\ka^2_{n-1}=\lim_{z\to\infty}{iY_{21}(z)\over 2\pi z^{n-1}},\qquad
U_{11}(z)=z^n+{\beta_n\over\sqrt{2n}}z^{n-1}+
{\ga_n\over 2n}z^{n-2}+\cdots
\ee
As $z\to\infty$, we need to know asymptotics of $Y(z)$ in the 
area outside the lenses (denote it $A$), which are given by the 
expressions:
\be\eqalign{
Y(z\sqrt{2n})=(2n)^{n\si_3/2}U(z),\\
U(z)=e^{nl\si_3/2}(I+R_1(z)+O(1/n^{2-2|\Re\beta|}))P_\infty(z)
e^{n(g(z)-l/2)\si_3},\\
z\in A\cap U_\infty.}\la{YU2}
\ee

Set
\be\la{LMN}\eqalign{
L_n^{(\pm)}=(8n)^{\pm 2\bt}(1-\lb_0^2)^{\pm 3\bt}f(\pm\bt)e^{\pm 2n\phi_+(\lb_0)},
\qquad
L_n=L_n^{(-)}-L_n^{(+)},\\
M_n=\left(\lb_0+i\sqrt{1-\lb_0^2}\right)L^{(+)}-
\left(\lb_0-i\sqrt{1-\lb_0^2}\right)L^{(-)},\\
N_n=\left(\lb_0-i\sqrt{1-\lb_0^2}\right)L^{(+)}-
\left(\lb_0+i\sqrt{1-\lb_0^2}\right)L^{(-)}.}
\ee

Let us compute $R_1(z)$  using (\ref{plem}). Consider first the 
neighborhood $U_{\lb_0}$.
Substituting $\De_1(x)$ given by (\ref{alb}) into (\ref{plem}) and calculating
residues at a simple pole $x=\lb_0$, we obtain the contribution to $R_1$
from the neighborhood $U_{\lb_0}$:
\be\eqalign{
R_1^{(\lb_0)}(z)=
{1\over 2\pi i}\int_{\partial U_{\lb_0}}{\De_1 dx\over x-z}=
-{1\over z}\left(1+{\lb_0\over z}+O(z^{-2})\right)
{1\over 2\pi i}\int_{\partial U_{\lb_0}}\De_1 dx\\
={1\over z}\left(1+{\lb_0\over z}+O(z^{-2})\right)
{\mathcal{D}_\infty^{\si_3}\over 8n(1-\lb_0^2)}
\begin{pmatrix}2\lb_0\bt^2-iL_n & M_n-2i\bt^2\cr
          N_n-2i\bt^2 & -2\lb_0\bt^2+iL_n\end{pmatrix}
\mathcal{D}_\infty^{-\si_3},}\la{R1lb}
\ee
as $z\to\infty$,
where $L_n$, $M_n$, and $N_n$ are defined in (\ref{LMN}).

To compute the contribution from the neighborhood $U_{1}$, 
we repeat the calculation now using $\De_1(z)$ from (\ref{a1}).
An additional
complication is that the pole at $x=1$ is of
second order. We obtain, using (\ref{calDp1}),
\be\eqalign{
R_1^{(1)}= {1\over 2\pi i}\int_{\partial U_1}{\De_1 dx\over x-z}=
{1\over z}\left(1+{1\over z}+O(z^{-2})\right)\\
\times
{\mathcal{D}_\infty^{\si_3}\over 64n}
\begin{pmatrix}1+8\bt^2{1+\lb_0\over 1-\lb_0}& 4i/3-8i\bt^2{1+\lb_0\over 1-\lb_0}+
8\bt\sqrt{1+\lb_0\over 1-\lb_0}\cr
4i/3-8i\bt^2{1+\lb_0\over 1-\lb_0}-
8\bt\sqrt{1+\lb_0\over 1-\lb_0} & -1-8\bt^2{1+\lb_0\over 1-\lb_0}
\end{pmatrix}\mathcal{D}_\infty^{-\si_3}\\
+{1\over z^2}{5\mathcal{D}_\infty^{\si_3}\over 3\times 64n}
\begin{pmatrix}-1&i\cr i&1\end{pmatrix}\mathcal{D}_\infty^{-\si_3}.}\la{R11}
\ee
Here the dependence on $\bt$ comes from the expansion (\ref{calDp1}).

Using (\ref{calDm1}), a similar calculation for $U_{-1}$ gives 
\be\eqalign{
R_1^{(-1)}= {1\over 2\pi i}\int_{\partial U_{-1}}{\De_1 dx\over x-z}=
{1\over z}\left(1-{1\over z}+O(z^{-2})\right)\\
\times
{\mathcal{D}_\infty^{\si_3}\over 64n}
\begin{pmatrix}-1-8\bt^2{1+\lb_0\over 1-\lb_0}& 4i/3-8i\bt^2{1+\lb_0\over 1-\lb_0}-
8\bt\sqrt{1+\lb_0\over 1-\lb_0}\cr
4i/3-8i\bt^2{1+\lb_0\over 1-\lb_0}+
8\bt\sqrt{1+\lb_0\over 1-\lb_0} & 1+8\bt^2{1+\lb_0\over 1-\lb_0}
\end{pmatrix}\mathcal{D}_\infty^{-\si_3}\\
+{1\over z^2}{5\mathcal{D}_\infty^{\si_3}\over 3\times 64n}
\begin{pmatrix}-1&-i\cr -i&1\end{pmatrix}\mathcal{D}_\infty^{-\si_3}.}
\la{R1-1}
\ee

Summing up the contributions (\ref{R1lb}), (\ref{R11}), 
and (\ref{R1-1}), we obtain
\be
R_1=R_1^{(1)}+R_1^{(-1)}+ R_1^{(\lb_0)}.
\ee
Substituting this into (\ref{YU2}), and using (\ref{calDinf})
and the expansions for $z\to\infty$,
\be
a(z)=1-{1\over 2z}+{1\over 8z^2}+O(z^{-3}),\qquad
g(z)=\ln z - {1\over 8z^2}+O(z^{-4}),\la{auxexp}
\ee
we finally obtain from (\ref{YU1}) 
\be
\ka_{n-1}^2={2^{n-1}e^{-2i\bt\arcsin\lb_0}\over\sqrt{\pi}(n-1)!}
\left\{
1+{iN_n\over 4n(1-\lb_0^2)}
+{\bt^2\over 2n}{2+\lb_0^2\over 1-\lb_0^2}+O\left({1\over
  n^{2-2|\Re\beta|}}\right)\right\},\la{k-as}
\ee
\be
\beta_n=\sqrt{2n}\left\{
i\bt\sqrt{1-\lb_0^2}+
{3\lb_0\bt^2-iL_n/2 \over  4n(1-\lb_0^2)}+
O\left({1\over  n^{2-2|\Re\beta|}}\right)
\right\},\la{b-as}
\ee
\be
\eqalign{
\ga_n=n\left\{-{n-1\over 4}-
\bt^2(1-\lb_0^2)+i\bt\lb_0\sqrt{1-\lb_0^2}+
{1 \over  4n(1-\lb_0^2)}\left(
3\lb_0^2\bt^2+iN_n/2\right.\right.\\
\left.\left.
+2i\bt\sqrt{1-\lb_0^2}[3\lb_0\bt^2-iL_n/2]\right)
+O\left({1\over  n^{2-2|\Re\beta|}}\right)\right\},}\la{g-as}
\ee
where $L_n$ and $N_n$  
are defined in (\ref{LMN}).
The error terms here and in (\ref{Y11}) are uniform 
for $\bt$ in a bounded set provided only $-1/2<\Re\bt<1/2$.
Indeed, 
these terms in (\ref{k-as}) -- (\ref{g-as}) are those in (\ref{YU2}), at worst 
multiplied by a polynomial in $\bt$ (independent of $n$) 
coming from the expansion (\ref{calDinf}) of $D(z)$. 
Now the error term in (\ref{YU2}) is that from (\ref{Ras})
which has the above uniformity property.
A similar analysis holds for the error term in (\ref{Y11}).

The combination of (\ref{k-as}) and (\ref{B}) gives the asymptotics (\ref{Bn})
of Theorem \ref{Th2} (see the remark following (\ref{idas1}) below), which 
reduce to (\ref{Bn2}), (\ref{Bn3}) in the particular cases
$\bt=i\ga$, and $\bt=i\ga$, $\lb_0=0$. Thus, Theorem \ref{Th2} is proved.

We constructed a solution to the Riemann-Hilbert problem of 
Section 2 for $n>n_0$,
$\bt$ in any bounded set of the strip
$-1/2<\Re\bt<1/2$. By uniqueness, it gives the orthogonal 
polynomials via (\ref{RHM}).
On the other hand, the determinantal representation for the orthogonal polynomials
shows that $R(z)$ is an analytic function of $\bt$. 
Furthermore, $R_1(z)$ is
also an analytic function of $\bt$ by construction. 
Thus, the error term in (\ref{Ras})
is both analytic and uniform in $\bt$ in a bounded set of the strip
$-1/2<\Re\bt<1/2$. Therefore, it is differentiable in $\bt$
 (the derivative being uniform in $\bt$). 
Hence, we easily conclude that the error terms in 
(\ref{Y11}), (\ref{k-as})--(\ref{g-as}) have the same differentiability property.
Alternatively, we could have deduced the differentiability of the error terms by 
noticing first that the asymptotic expansions of hypergeometric functions we used 
are differentiable in $\bt$.


\section{Evaluation of the Hankel determinant}

Let us obtain the asymptotic expression for the r.h.s. of the 
differential identity (\ref{id}).
The expression for $Y_{11}(\lb_0\sqrt{2n})$ was found in the previous section and
is given by (\ref{Y11}). 
A similar representation holds for $Y_{21}(\lb_0\sqrt{2n})$:
\be\la{Y21}\eqalign{
Y_{21}(\lb_0\sqrt{2n})=(2n)^{-n/2}U_{21}(\lb_0)=
-(2n)^{-n/2}{1\over \mathcal{D}_\infty}
2^{n-1} e^{n(\lb_0^2+1/2)}\bt\\
\times
[i(a_+(\lb_0)-a_+(\lb_0)^{-1})F^{(+)}-
(a_+(\lb_0)+a_+(\lb_0)^{-1})F^{(-)}](1+O(1/n^{1-2|\Re\beta|})).}
\ee
Since our asymptotic formulas are differentiable in $\bt$,
we obtain from (\ref{Y11}) and (\ref{Y21}):
\be\eqalign{
Y_{11}Y'_{21,\,\bt} -Y'_{11,\,\bt} Y_{21}=
-2{\mathcal{D}'_{\infty,\,\bt}\over \mathcal{D}_\infty}Y_{11}Y_{21}\\
-\bt e^{2n\lb_0^2}\left\{F^{(+)\,'}_\bt F^{(-)}-
F^{(+)}F^{(-)\,'}_\bt+O\left({\ln n\over n^{1-4|\Re\beta|}}\right)\right\}=
-2{\mathcal{D}'_{\infty,\,\bt}\over \mathcal{D}_\infty}Y_{11}Y_{21}\\
-{\pi\bt\over \sin\pi\bt} e^{2n\lb_0^2}\left\{
2\ln(8n(1-\lb_0^2)^{3/2})-\left(\ln{\Gamma(\bt)\over \Gamma(-\bt)}\right)'_\bt
+O\left({\ln n\over n^{1-4|\Re\beta|}}\right)\right\}.}
\ee
As follows from (\ref{has}),
\[
[\ln(\ka_n\ka_{n-1})]'_\bt=
-2{\mathcal{D}'_{\infty,\,\bt}\over \mathcal{D}_\infty}
+O\left({\ln n\over n^{1-2|\Re\beta|}}\right).
\]
Therefore we have for the last part of (\ref{id}):
\be\la{idas1}\eqalign{
{1\over\pi}e^{-\mu_0^2}\sin{(\pi\bt)}
\left[Y_{11}(\mu_0)Y_{21}(\mu_0)'_\bt-
Y_{11}(\mu_0)'_{\bt} Y_{21}(\mu_0)
-(\ln\ka_n\ka_{n-1})'_{\bt}Y_{11}(\mu_0) Y_{21}(\mu_0)\right]\\
=-\bt\left\{
2\ln(8n(1-\lb_0^2)^{3/2})-\left(\ln{\Gamma(\bt)\over \Gamma(-\bt)}\right)'_\bt
\right\}+O\left({\ln n\over n^{1-4|\Re\beta|}}\right).}
\ee

We now turn to the evaluation of the rest of (\ref{id}).
Care is needed with the estimation of
$\ka_n$. To obtain the 
asymptotics of $\ka_n$ from (\ref{k-as}) we need first to replace 
$n$ with $n+1$ and second, to replace $\lb_0$ with $\lb_0\sqrt{n\over n+1}$.
After a straightforward calculation, we obtain
\be\la{idas2}\eqalign{
-n(\ln\ka_n\ka_{n-1})'_{\bt}-
\left(\ka_{n-1}^2\over \ka_n^2\right)'_{\bt}+
2\left[\ga'_{n,\bt}-\beta_n\beta'_{n,\bt}\right]\\
=
2n\ln\left(i\lb_0+\sqrt{1-\lb_0^2}\right)-2\bt+2i\lb_0 n\sqrt{1-\lb_0^2}
+O\left({\ln n\over n^{1-2|\Re\beta|}}\right).}
\ee

The sum of (\ref{idas1}) and (\ref{idas2}) yields
\be\la{idfin}\eqalign{
{d\over d\bt}\ln D_n(\bt)=
2in\arcsin\lb_0-2\bt+2i\lb_0 n\sqrt{1-\lb_0^2}\\
-\bt \left\{
2\ln(8n(1-\lb_0^2)^{3/2})-\left(\ln{\Gamma(\bt)\over \Gamma(-\bt)}\right)'_\bt
\right\}
+O\left({\ln n\over n^{1-4|\Re\beta|}}\right).}
\ee

Since the error term here is uniform in $\bt$, we can integrate 
this identity.

Let $\wt\Om$ be a bounded subset of the strip $-1/4<\Re\bt<1/4$ 
(cf. Section 2). The identity (\ref{idfin}) was derived for $\bt\notin\Om$ 
(see Sections 2 and 3). Note that the number of points $\bt$ in $\Om$, if any, 
is finite. Indeed, the function
$\ka_k^2=\ka_k^2(\bt)$ is a ratio $D_k/D_{k+1}$ of two 
analytic functions of $\bt$, which are not identically zero because they are 
known to be positive for $\Re\bt=0$.
Let us rewrite the identity (\ref{idfin}) in 
the form $f'(\bt)=0$, where $f(\bt)=D_n(\bt)\exp(-\int_0^\bt r(n,t)dt)$ and
where $r(n,\bt)$ is the r.h.s. of (\ref{id}).
Since expression (\ref{idfin}) for $r(n,\bt)$ holds uniformly and is continuous 
for $\bt\in\wt\Om$ provided $n$ is larger than some $n_0(\wt\Om)$,
and $D_n(\bt)$ and its derivative are continuous, the function
$f(\bt)$ is continuously differentiable for all $n>n_0(\wt\Om)$. Hence,
$f'(\bt)=0$ for {\it all} $\bt\in\wt\Om$ and $n>n_0(\wt\Om)$. 
Taking into account that $f(\bt)=f(0)=D_n(0)\neq 0$,
we conclude that  $D_n(\bt)$ is nonzero, and that the identity (\ref{idfin})
is, in fact, true for all $\bt\in\wt\Om$ if $n$ is sufficiently large
(larger than $n_0(\wt\Om)$).

Now set $\bt=0$, and integrate (\ref{idfin})
from $0$ to some $\bt$ over $\bt$. 
We obtain
\be\la{fin1}\eqalign{
\ln D_n(\bt)-\ln D_n(0)=
2n\bt\ln\left(i\lb_0+\sqrt{1-\lb_0^2}\right)-
\bt^2+2i\bt\lb_0 n\sqrt{1-\lb_0^2}-\\
\bt^2 \ln(8n(1-\lb_0^2)^{3/2})-\int_0^\bt\ln{\Gamma(\bt)\over \Gamma(-\bt)}d\bt
+\bt \ln{\Gamma(\bt)\over \Gamma(-\bt)}
+O\left({\ln n\over n^{1-4|\Re\beta|}}\right).}
\ee

In view of (\ref{C}) Theorem \ref{Th1} is proved.

\section{Acknowledgement}
We thank Y. Chen for attracting our attention to this problem.

\section{Appendix} 

The confluent hypergeometric function $\psi(a,c;z)$
is defined as a unique solution of the confluent
hypergeometric equation
\begin{equation}\label{che}
zw'' +(c-z)w' -aw = 0
\end{equation}
satisfying the asymptotic condition
\begin{equation}\label{cha}
\psi(a,c;z) \sim z^{-a}\sum_{n=0}^{\infty}
(-1)^{n}\frac{(a)_{n}(1+a-c)_{n}}{n!z^{n}},
\end{equation}
$$
z \to \infty, \qquad -\frac{3\pi}{2} <  \arg z < \frac{3\pi}{2}.
$$
Here the standard notation
$$
(a)_{0} = 1, \qquad (a)_{n} = a(a+1)\cdots (a+n-1) = 
\frac{\Gamma(a+n)}{\Gamma(a)},\qquad n \geq 1
$$
is used. The function $\psi(a,c;z)$ admits the following 
integral representation \cite{BE}:
\begin{equation}\label{chi}
\psi(a,c;z) = \frac{1}{\Gamma(a)}
\int_{0}^{\infty e^{-i\alpha}}t^{a-1}(1+t)^{c-a-1}e^{-zt}dt,
\end{equation}
\begin{equation}\label{chi1}
-\pi < \alpha < \pi, \qquad -\frac{\pi}{2} 
+ \alpha < \arg z < \frac{\pi}{2} + \alpha.
\end{equation} 
In (\ref{chi}) the inequality
\begin{equation}\label{ain}
\Re a > 0
\end{equation}
is assumed, and the branches of the functions $t^{a-1}$
and $(1+t)^{c-a-1}$ are defined on the $t$-plane cut along
the semi-axis $(-\infty, 0]$ and fixed by the conditions,
\begin{equation}\label{branches}
-\pi <\arg t < \pi, \qquad -\pi <  \arg (1+t) < \pi.
\end{equation}

 When $\alpha = 0$, the integration in formula (\ref{chi}) is performed
 along the positive semi-axis, $0\leq t <\infty$, and the formula
 gives $\psi(a,c;z)$ for the values of $\arg z$ between $-\frac{\pi}{2}$
 and $\frac{\pi}{2}$. The increase of  $\alpha$ from $0$ to $\pi$
 leads to the clock-wise rotation of the ray of integration and,
 simultaneously, to the counter-clock-wise rotation of the
 domain in the (universal covering of ) punctured $z$-plane. This process produces  
 an analytic continuation of $\psi(a,c;z)$ to the 
 domain
 \begin{equation}\label{d1}
 \frac{\pi}{2} < \arg z < \frac{3\pi}{2}
 \end{equation}
and is shown in Figure \ref{A1}.

\begin{figure}
\centerline{\psfig{file=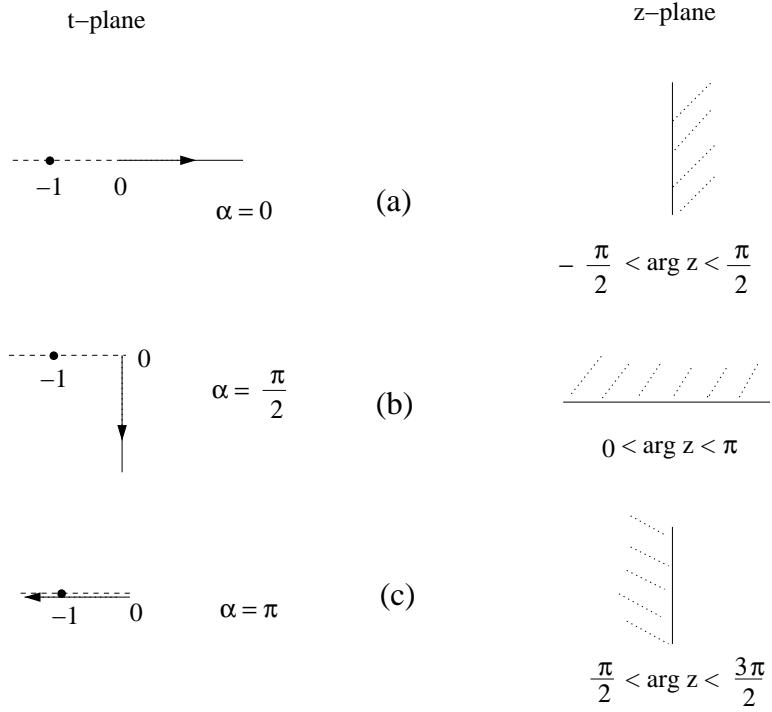,width=4.0in,angle=0}}
\vspace{0cm}
\caption{
Analytic continuation of $\psi(a,c;z)$ to the domain 
$\frac{\pi}{2} < \arg z < \frac{3\pi}{2}$.}
\label{A1}
\end{figure}

When $z$ is in the domain (\ref{d1}), the contour of integration is the lower side
of the cut $(-\infty, 0]$.
Similarly, the decrease of $\alpha$ from $0$ to $-\pi$
 leads to the contour-clock-wise rotation of the ray of integration and,
 simultaneously, to the clock-wise rotation of the
 domain in the (universal covering of ) punctured $z$-plane. This process produces  
 an analytic continuation of $\psi(a,c;z)$ to the 
 domain
\begin{equation}\label{d2}
- \frac{3\pi}{2} < \arg z < -\frac{\pi}{2}
\end{equation}
and is shown in Figure \ref{A2}.
When $z$ is in the domain (\ref{d2}), the contour of integration is the upper side
of the cut $(-\infty, 0]$.

\begin{figure}
\centerline{\psfig{file=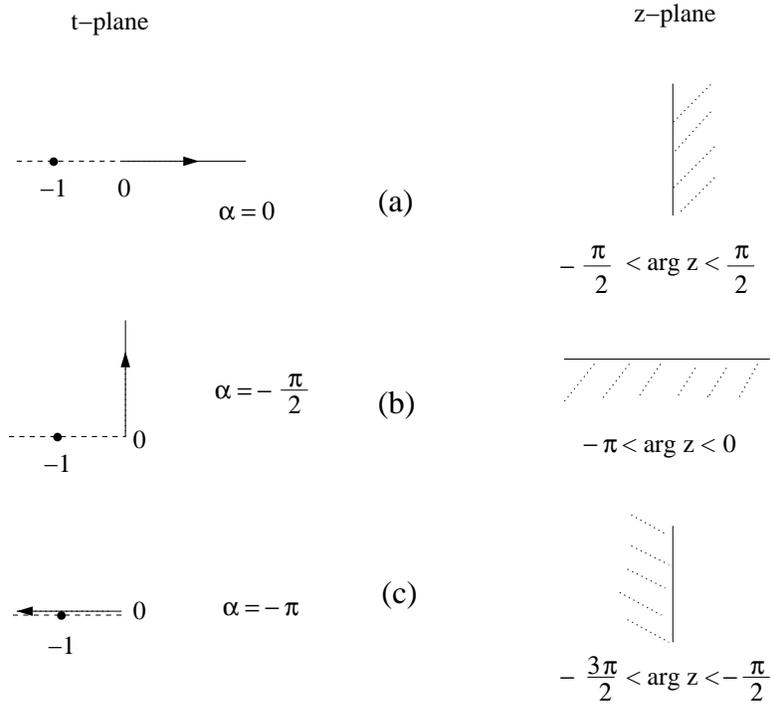,width=4.0in,angle=0}}
\vspace{0cm}
\caption{
Analytic continuation of $\psi(a,c;z)$ to the domain 
$-\frac{3\pi}{2} < \arg z < -\frac{\pi}{2}$.}
\label{A2}
\end{figure}

When $\alpha$ goes beyond the interval $(-\pi, \pi)$, equation (\ref{chi})
provides  the analytical  continuation of the function $\psi(a,c;z)$ to
the whole universal covering, $\widetilde{{\mathbb C}} \setminus \{0\}$, 
of the punctured $z$-plane ${\mathbb C} \setminus \{0\}$. 

As $z$ passes from one sheet of the universal covering
$\widetilde{{\mathbb C}} \setminus \{0\}$ to another, a loop
around the interval $[-1, 0]$ must be added to the contour of integration
in (\ref{chi}). In Figure \ref{A3}, we illustrate the process of the analytic
continuation of $\psi(a,c;z)$ to the sheet marked by the condition
\begin{equation}\label{d3}
\frac{3\pi}{2} < \arg z < \frac{5\pi}{2}.
\end{equation}

\begin{figure}
\centerline{\psfig{file=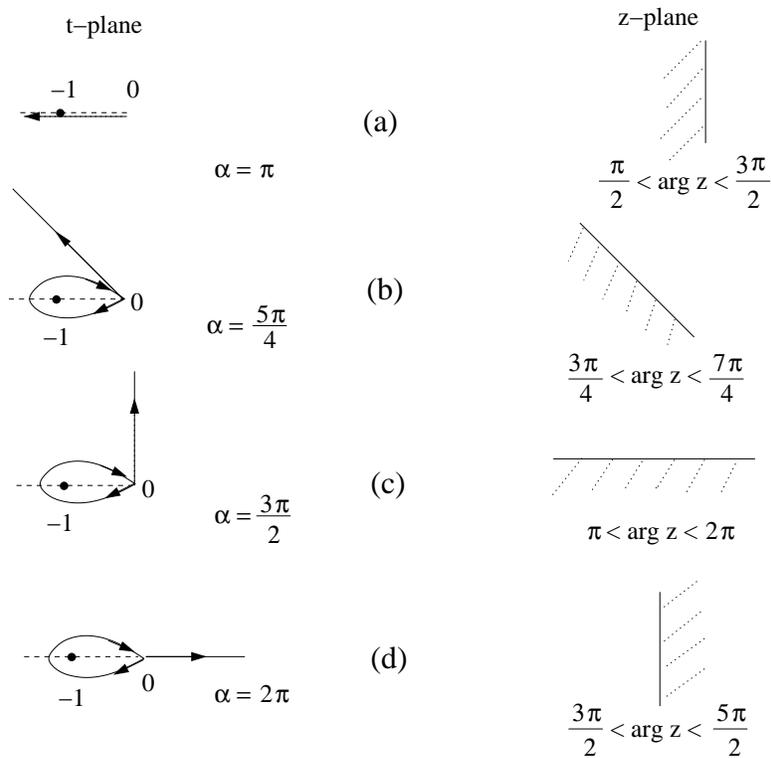,width=4.0in,angle=0}}
\vspace{0cm}
\caption{
Analytic continuation of $\psi(a,c;z)$ to the domain 
$\frac{3\pi}{2} < \arg z < \frac{5\pi}{2}$.}
\label{A3}
\end{figure}

The domain (\ref{d3}) corresponds to the choice 
$\alpha = 2\pi$ in (\ref{chi}) with the contour of
integration as shown in Figure \ref{A3}d. Assume that,
in addition to (\ref{ain}), the inequality
\begin{equation}\label{cin}
\Re c > \Re a
\end{equation}
holds. The loop in Figure \ref{A3}d can be deformed to the
sides of the cut $[-1, 0]$ with the following arguments 
of $t$ and $1+t$ in the integrand in
the r.h.s. of (\ref{chi}):
\be\label{arg}
\eqalign{
\arg t|_{[-1,0]_{-}} = \arg t|_{[-1,0]_{+}} = -\pi, \qquad 
\arg t|_{[0, \infty)} = -2\pi,\\
\arg (1+t)|_{[-1,0]_{-}} = 0, \qquad  
\arg (1+t)|_{[-1,0]_{+}} = 
\arg (1+t)|_{[0, \infty)} = -2\pi.}
\ee
Here we use the notation $[-1,0]_{+}$ and $[-1,0]_{-}$
for the upper and lower sides of the cut $[-1,0]$, respectively.
It follows
from (\ref{arg}) that
the function $\psi(a,c;z)$ on the sheet (\ref{d3}) can be 
represented as
\begin{equation}\label{psi2p1}
\eqalign{
\psi(a,c;z) = -\frac{e^{-i\pi a}}{\Gamma(a)}\int_{0}^{-1}
|t|^{a-1}|1+t|^{c-a-1}e^{-zt}dt\\
- \frac{e^{-i\pi a}}{\Gamma(a)}e^{-2i\pi(c - a)}\int_{-1}^{0}
|t|^{a-1}|1+t|^{c-a-1}e^{-zt}dt\\
+ \frac{e^{-2i\pi a}}{\Gamma(a)}e^{-2i\pi(c - a)}\int_{0}^{\infty}
|t|^{a-1}|1+t|^{c-a-1}e^{-zt}dt, \qquad 
\frac{3\pi}{2} < \arg z < \frac{5\pi}{2},}
\end{equation}
where we assumed the convention that
$\arg|a| = 0$ in any expression of the form $|a|^b$.
Observe now that in the principal sheet, i.e. 
when $-\frac{\pi}{2} < \arg z < \frac{\pi}{2}$,
the function $\psi(a,c;z)$ is given by the original
formulae (\ref{chi}) -- (\ref{branches}) where one has
to set $\alpha = 0$ (see also Figure \ref{A1}a). 
This yields the equation:
\begin{equation}\label{psi2p2}
\psi(a,c;z) = \frac{1}{\Gamma(a)}\int_{0}^{\infty}
|t|^{a-1}|1+t|^{c-a-1}e^{-zt}dt,\qquad
-\frac{\pi}{2} < \arg z < \frac{\pi}{2}.
\end{equation}
Combined, equations (\ref{psi2p1}) and (\ref{psi2p2}) 
imply the following relation between the values of the
functions $\psi(a,c;z)$ and $\psi(a,c;e^{2\pi i}z)$:
\begin{equation}\label{psi2p3}
\psi(a,c;e^{2\pi i}z) = 
e^{-2\pi ic}\psi(a,c;z) +
\frac{e^{-i\pi a}}{\Gamma(a)}\left( 1 - e^{-2\pi i(c - a)}\right)\int_{-1}^{0}
|t|^{a-1}|1+t|^{c-a-1}e^{-zt}dt.
\end{equation}
Note that this equation holds on the {\it whole} universal
covering $\widetilde{{\mathbb C}} \setminus \{0\}$, i.e. for
{\it all} values of $\arg z$.  Indeed, the function
 $\psi(a,c;z)$ has already been defined as a function
 on $\widetilde{{\mathbb C}} \setminus \{0\}$.
 The function  $\psi(a,c;e^{2\pi i}z)$  is a composition of the
 function $\psi(a,c;z)$ and the conformal isomorphism
 of the universal covering $\widetilde{{\mathbb C}} \setminus \{0\}$
 determined by the mapping $z \to e^{2\pi i}z$. Finally,
the integral over the finite interval $[-1, 0]$ is an
entire function and hence an analytic function on the
universal covering  $\widetilde{{\mathbb C}} \setminus \{0\}$ as well. 

The integral term in (\ref{psi2p3}) actually describes the first canonical
solution of equation (\ref{che}) at $z =0$. More precisely, equation
(\ref{che}) possesses a unique normalized holomorphic at zero
solution,
\begin{equation}\label{phidef}
\phi(a,c;z) = \sum_{n=0}^{\infty}\frac{(a)_{n}}{(c)_{n}}\frac{z^n}{n!}.
\end{equation}
The integral representation of $\phi(a,c;z)$ links it with the
integral from (\ref{psi2p3}). Namely,
\begin{equation}\label{phiint}
\phi(a,c;z) = \frac{\Gamma(c)}{\Gamma(a)\Gamma(c-a)}\int_{0}^{1}
t^{a-1}(1-t)^{c-a-1}e^{zt}dt
\end{equation}
$$
\equiv  \frac{\Gamma(c)}{\Gamma(a)\Gamma(c-a)}
\int_{-1}^{0}
|t|^{a-1}|1+t|^{c-a-1}e^{-zt}dt.
$$
Therefore, equation  (\ref{psi2p3}) can be interpreted as the relation
\begin{equation}\label{psiphi1}
\psi(a,c;e^{2\pi i}z) = 
e^{-2\pi ic}\psi(a,c;z) +
e^{-i\pi c}\frac{2\pi i}{\Gamma(c)\Gamma(1+a-c)}
\phi(a,c;z),
\end{equation}
where we used the classical formula
$\Gamma(s)\Gamma(1-s) = \pi/\sin{\pi s}.$

It is worth noticing that the function $\phi(a,c;z)$, as 
an entire function of the variable $z$, is invariant under
the mapping $z \to e^{2\pi i}z$:
\begin{equation}\label{phiaut}
\phi(a,c;e^{\pm 2\pi i}z) = \phi(a,c;z).
\end{equation}

Our next task is to find another
relation between the $\psi$-and $\phi$-functions.
To this end, let
\begin{equation}\label{psihat}
\widehat{\psi}(a,c;z)\equiv e^{-i\pi(c-a)}\psi(c-a,c;e^{-i\pi}z)e^{z}.
\end{equation}
The function $\widehat{\psi}(a,c;z)$ is well-defined by this equation as 
an analytic function on $\widetilde{{\mathbb C}} \setminus \{0\}$.
In fact, $\widehat{\psi}(a,c;z)$ is  the second canonical
at infinity solution of the confluent
hypergeometric equation (\ref{che}). Its behavior
at infinity is given by the asymptotic series
\begin{equation}\label{cha2}
\widehat{\psi}(a,c;z) \sim e^{z}z^{a-c}\sum_{n=0}^{\infty}
\frac{(c-a)_{n}(1-a)_{n}}{n!z^{n}},
\end{equation}
$$
z \to \infty, \qquad -\frac{\pi}{2} <  \arg z < \frac{5\pi}{2}.
$$
Let us establish the relation between the functions $\widehat{\psi}(a,c;z)$, 
$\psi(a,c;z)$, and $\phi(a,c;z)$.

We start with the integral representation for $\widehat{\psi}(a,c;z)$ assuming that
$\frac{3\pi}{2} < \arg z < \frac{5\pi}{2}$. The latter implies that
$$
\frac{\pi}{2} < \arg e^{-i\pi}z < \frac{3\pi}{2},
$$
and hence we can use the definitions
(\ref{chi}) -- (\ref{branches}) with
$\alpha = \pi$ in  (\ref{psihat}).
This leads to the equation:
\begin{equation}\label{psihat2}
\widehat{\psi}(a,c;z)=
\frac{e^{-i\pi(c-a)}}{\Gamma(c-a)}\int_{0}^{-\infty}
t^{c-a-1}(1+t)^{a-1}e^{zt}dt\,e^{z},\qquad
\frac{3\pi}{2} < \arg z < \frac{5\pi}{2},
\end{equation}
where it is assumed that (cf. Figure \ref{A1}c)
\[
\arg t|_{(-\infty, 0]} = -\pi,\qquad
\arg(1+t)|_{(-\infty, -1]} = -\pi, \qquad
\arg(1+t)|_{(-1, 0]} = 0.
\]
Changing the variable, $1+t \to -t$,
and taking
into account the above conditions on the arguments, we
can rewrite (\ref{psihat2}) as
\begin{equation}\label{psihat3}
\widehat{\psi}(a,c;z)=
-\frac{e^{-i\pi(c-a)}}{\Gamma(c-a)}\int_{-1}^{\infty}
(-1-t)^{c-a-1}(-t)^{a-1}e^{-zt}dt,\qquad
\frac{3\pi}{2} < \arg z < \frac{5\pi}{2},
\end{equation}
where 
\be\eqalign{
(-1 - t)^{c-a-1}|_{(-1,\infty]} = |1+t|^{c-a-1}e^{-i\pi(c-a-1)},\\
(-t)^{a-1}|_{(-1, 0]} = |t|^{a-1}, \qquad
(-t)^{a-1}|_{[0, \infty)} = |t|^{a-1}e^{-i\pi(a-1)}.}
\ee
This, in turn, leads to the equation
\begin{equation}\label{psihat4}
\eqalign{
\widehat{\psi}(a,c;z)=
\frac{e^{-2i\pi(c-a)}}{\Gamma(c-a)}\int_{-1}^{0}
|1+t|^{c-a-1}|t|^{a-1}e^{-zt}dt\\
-\frac{e^{-2i\pi(c-a) -i\pi a}}{\Gamma(c-a)}\int_{0}^{\infty}
|1+t|^{c-a-1}|t|^{a-1}e^{-zt}dt, \qquad
\frac{3\pi}{2} < \arg z < \frac{5\pi}{2},}
\end{equation}
which, with the help of (\ref{psi2p2}) and (\ref{phiint}), can be re-written as
\begin{equation}\label{psihat5}
\eqalign{
\widehat{\psi}(a,c;z)=
e^{-2i\pi(c-a)}\frac{\Gamma(a)}{\Gamma(c)}\phi(a,c;z)\\
-e^{-2i\pi c +i\pi a}\frac{\Gamma(a)}{\Gamma(c-a)}\psi(a,c;e^{-2\pi i}z), \qquad
\frac{3\pi}{2} < \arg z < \frac{5\pi}{2}.}
\end{equation}
Note  that the restriction $\frac{3\pi}{2} < \arg z < \frac{5\pi}{2}$ can 
now be dropped; indeed, all the functions involved 
are analytic functions on the whole
universal covering  $\widetilde{{\mathbb C}} \setminus \{0\}$. 
Equation (\ref{psihat5}) is the relation between the functions
$\widehat{\psi}(a,c;z)$, $\psi(a,c;z)$, and $\phi(a,c;z)$ we were looking for.

Equations (\ref{psihat5}) and (\ref{psihat}) yield the formula
\begin{equation}\label{int01}
\phi(a,c;z)
=\frac{\Gamma(c)}{\Gamma(c-a)} e^{-i\pi a}\psi(a,c;e^{-2\pi i}z)
+\frac{\Gamma(c)}{\Gamma(a)} e^{i\pi(c-a) }\psi(c-a,c;e^{-i\pi }z) e^{z},
\end{equation}
which, with the help of the mapping $z \to e^{2\pi i}z$, can be also written as
\begin{equation}\label{int02}
\phi(a,c;z)
=\frac{\Gamma(c)}{\Gamma(c-a)} e^{-i\pi a}\psi(a,c;z)
+\frac{\Gamma(c)}{\Gamma(a)} e^{i\pi(c-a) }\psi(c-a,c;e^{i\pi }z) e^{z}.
\end{equation}
Observe that the operation 
$z\rightarrow e^{-2\pi i}z$ brings  equation   (\ref{psiphi1})  to the form
\begin{equation}\label{psiphi2}
\psi(a,c;e^{-2\pi i}z) = 
e^{2\pi ic}\psi(a,c;z) -
e^{i\pi c}\frac{2\pi i}{\Gamma(c)\Gamma(1+a-c)}
\phi(a,c;z).
\end{equation}
Using this relation in (\ref{int01}), we can exclude $\psi(a,c;e^{-2\pi i}z)$
from the latter and obtain a ``companion'' equation to (\ref{int02}), i.e.,
\begin{equation}\label{int03}
\phi(a,c;z)
=\frac{\Gamma(c)}{\Gamma(c-a)} e^{i\pi a}\psi(a,c;z)
+\frac{\Gamma(c)}{\Gamma(a)} e^{-i\pi(c-a) }\psi(c-a,c;e^{-i\pi }z) e^{z}.
\end{equation}
Note again that all the five relations (\ref{int01}), (\ref{int02}), (\ref{psiphi1}),
(\ref{psiphi2}), and (\ref{int03}) hold on the {\it whole} universal
covering $\widetilde{{\mathbb C}} \setminus \{0\}$, i.e., for
{\it all} values of $\arg z$.

Each of  the equations (\ref{int02}) and (\ref{int03}) allows us to
exclude the function $\phi(a,c;z)$ from (\ref{psiphi1}).
This leads to the following two representations of
the function $\psi(a,c;e^{2\pi i}z)$:
\begin{equation}\label{rep1}
\psi(a,c;e^{2\pi i}z) = e^{-2i\pi a}\psi(a,c;z) 
+e^{-i\pi a}\frac{2\pi i}{\Gamma(a)\Gamma(1+a-c)}
\psi(c-a,c;e^{i\pi}z)e^{z}
\end{equation}
and
\begin{equation}\label{rep2} 
\eqalign{
\psi(a,c;e^{2\pi i}z) = 
\Bigl( 1 + e^{-2i\pi c} - e^{-2i\pi c+2i\pi a}\Bigr)\psi(a,c;z)\\
+e^{i\pi a - 2i\pi c}\frac{2\pi i}{\Gamma(a)\Gamma(1+a-c)}
\psi(c-a,c;e^{-i\pi}z)e^{z}.}
\end{equation}

\begin{remark}
Restrictions (\ref{ain}) and (\ref{cin}) can be lifted. The functions
$\psi(a,c;z)$ and $\phi(a,c;z)/\Gamma(c)$ are in fact entire functions 
of the complex
parameters $a$ and $c$. For the function $\psi(a,c;z)$ this  can be seen in 
the usual way
by replacing, cf. \cite{BE}, the integration in (\ref{chi}) along the
ray by the integration along  the loop around this ray. Namely,
one can rewrite (\ref{chi}) as
\be\label{chigeneral}
\psi(a,c;z) = \frac{1}{\left(1-e^{2\pi i a}\right)\Gamma(a)}
\int_{(0,+)}^{\infty e^{-i\alpha}}t^{a-1}(1+t)^{c-a-1}e^{-zt}dt
\end{equation}
\be\label{chigeneral0}
\equiv
\frac{i}{2\pi}e^{-i\pi a}\Gamma(1-a)
\int_{(0,+)}^{\infty e^{-i\alpha}}t^{a-1}(1+t)^{c-a-1}e^{-zt}dt,
\ee
\begin{equation}\label{chi1general}
-\frac{\pi}{2}+ \alpha < \arg z < \frac{\pi}{2} + \alpha,
\end{equation} 
where the symbol 
$\int_{(0,+)}^{\infty e^{-i\alpha}}$
stands for the integration along the loop around the
original contour in (\ref{chi}). In Figure \ref{A4}, the new contour
is shown for several different values of the parameter $\alpha$.

\begin{figure}
\centerline{\psfig{file=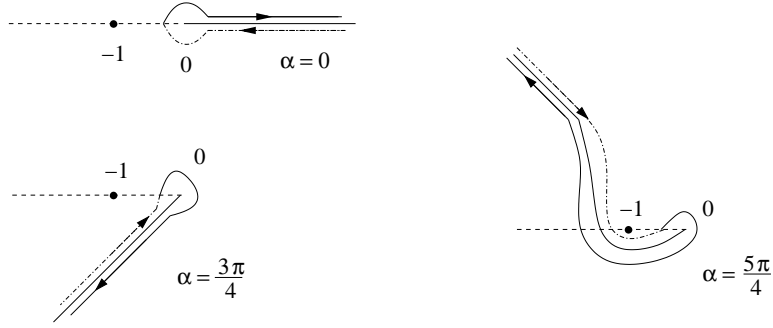,width=4.0in,angle=0}}
\vspace{0cm}
\caption{
Contours of integration for  $\psi(a,c;z)$ with arbitrary
$a$ and $c$.}
\label{A4}
\end{figure}

Equations (\ref{chigeneral}) -- (\ref{chi1general}) define the analytic
continuation of $\psi(a,c;z)$ into the whole $a$- and $c$-complex
planes (note that the integer values of $a$ are removable
singularities of the right-hand sides in (\ref{chigeneral}), (\ref{chigeneral0})).
Simultaneously, the validity of the relations (\ref{rep1}) and (\ref{rep2})
is extended to all complex $a$ and $c$.

Also in the usual way (see e.g. \cite{Olver}, Chapter 4, Section 5), 
the representations (\ref{chigeneral}) -- (\ref{chi1general}) 
can be used to ensure that the asymptotics (\ref{cha}) are uniform
for $a$ and $c$ belonging to any compact set of the complex plane.

In the case of the function $\phi(a,c;z)/\Gamma(c)$,
the relevant analytic continuation is achieved 
with the help of the replacement in (\ref{phiint}) of the integration 
along the interval $[0,1]$ by the integration over the double loop:
Pochhammer's loop (see, e.g., \cite{BE}).  Alternatively, one can just use the
relation  (\ref{int02}) and already established analyticity
of the $\psi$-function.
\end{remark}

\begin{remark}\label{BE}
In the standard literature, see e.g. \cite{BE}, there
is a convention to use equation (\ref{int03}) for 
$\Im z >0$, and to use equation (\ref{int02}) for 
$\Im z <0$. In fact, both equations are usually
combined into a single formula:
\begin{equation}\label{int04}
\eqalign{
\phi(a,c;z)
=\frac{\Gamma(c)}{\Gamma(c-a)} e^{i\pi a{\epsilon}}\psi(a,c;z)\\
+\frac{\Gamma(c)}{\Gamma(a)} e^{-i\pi(c-a){\epsilon}}
\psi(c-a,c;e^{-i\pi {\epsilon}}z) e^{z}, \qquad \epsilon = 
\mbox{sign}\,{\Im z}.}
\end{equation}
The reason for this is to ensure that when $-\pi < \arg z < \pi $
the both $\psi$-functions in the right-hand side have 
their arguments within  the interval $(-3\pi/2, 3\pi/2)$
and hence the canonical asymptotics (\ref{cha}) hold
for both terms. It is worth noticing, however, that there
is no prohibition for using (\ref{int04}), say with $\epsilon =1$
but for $\Im z < 0$. 

It is also worth mentioning one more time that, similar to equations
(\ref{rep1}) and (\ref{rep2}),
equations (\ref{int02}), (\ref{int03}), and (\ref{int04}) are valid
for all complex values of the parameters $a$ and $c \neq -n$,
$n =0, 1, 2, \dots$ 
\end{remark}


\begin{thebibliography}{99}


\bi{JBD} J. Baik, P. Deift, K. Johansson: On the Distribution of the 
Length of the Longest Increasing Subsequence of Random Permutations,
J. Amer. Math. Soc. {\bf 12}, 1119--1178 (1999)

\bibitem{B} E. Basor: Asymptotic formulas for Toeplitz determinants. Trans.
Amer. Math. Soc. {\bf 239}, 33--65 (1978)


\bi{BE} Bateman, Erdelyi: Higher transcendental functions, 
New York: McGraw-Hill, 1953-1955

\bi{BC} E. Basor, Y. Chen: Perturbed Hankel determinants.  
J. Phys. A  {\bf 38}, 10101--10106 (2005)

\bi{BCW} E. Basor, Y. Chen, H. Widom: 
Determinants of Hankel matrices.  J. Funct. Anal. {\bf 179}, 214--234 (2001)

\bi{BI} Pavel M. Bleher, Alexander R. Its: 
Asymptotics of the partition function of a random matrix model.  
Ann. Inst. Fourier (Grenoble)  {\bf 55},  no. 6, 1943--2000 (2005)

\bi{BK} P. M. Bleher, A. B. J. Kuijlaars: Random matrices with
external source and multiple orthogonal polynomials. 
Int. Math. Res. Notices {\bf 2004} (3), 109--129 (2004)
[math-ph/0307055]

\bi{BS} A. B\"ottcher and B. Silbermann: Toeplitz matrices and determinants
with Fisher-Hartwig symbols. J. Funct. Anal. {\bf 63}, 178--214 (1985)

\bibitem{BH} E. Br\'ezin, S. Hikami: Characteristic polynomials of
  random matrices.  Commun. Math. Phys. {\bf 214}, 111--135 (2000) 

\bi{CP} 
Y. Chen and G. Pruessner, {\it Orthogonal polynomials with 
discontinuous weights,}
J. Phys. A.: Math. Gen. {\bf 38}, L191--L198 (2005)

\bi{Dbook} P. Deift: Orthogonal polynomials and random matrices: a
Riemann-Hilbert approach. Courant Lecture Notes in Math. 1998 

\bi{D}  P. Deift: Talk at the conference ``Spectral theory and
  inverse spectral theory for Jacobi operators'', Snowbird, June 2003. 

\bi{DZ} P. Deift and X. Zhou: A steepest descent method for
oscillatory Riemann-Hilbert problem. Ann. Math. {\bf 137}, 295--368
(1993)


\bi{Dstrong} P. Deift, T. Kriecherbauer, K. T-R McLaughlin,   
S. Venakides, X. Zhou: Strong asymptotics for orthogonal polynomials
with respect to exponential weights. Commun. Pure Appl. Math. {\bf 52},
1491--1552 (1999)

\bi{DIKZ} P. Deift, A. Its, I. Krasovsky, X. Zhou: The Widom-Dyson
constant and related questions of the asymptotic analysis of Toeplitz 
determinants.  Proceedings of the AMS meeting, Atlanta 2005.
J. Comput. Appl. Math. {\bf 202}, 26--47 (2007) 
[math.FA/0601535]

\bi{DIK2} P. Deift, A. Its, I. Krasovsky: Asymptotics of the Airy-kernel 
determinant, to appear in Commun. Math. Phys. [math.FA/0609451]

\bi{Ehr} T. Ehrhardt: A status report on the asymptotic behavior of Toeplitz
determinants with Fisher-Hartwig singularities. 
Operator Theory: Adv. Appl. {\bf 124}, 217--241 (2001)


\bi{EM} N. M. Ercolani, K. D. T.-R. McLaughlin: Asymptotics of the partition 
function for random matrices via Riemann-Hilbert techniques and applications 
to graphical enumeration.  Int. Math. Res. Not.  {\bf 2003}, 755--820 (2003). 

\bi{FH} M. E. Fisher, R. E. Hartwig: Toeplitz determinants: Some applications, 
theorems, and conjectures. Advan. Chem. Phys. {\bf 15}, 333--353 (1968) 


\bi{FIK} A. S. Fokas, A. R. Its, A. V. Kitaev: The isomonodromy
approach to matrix models in 2D quantum gravity. 
Commun. Math. Phys. {\bf 147}, 395--430 (1992)

\bibitem{FF} P. J. Forrester, N. E. Frankel: Applications and
  generalizations of Fisher-Hartwig asymptotics. 
J. Math. Phys. {\bf 45} (No.5), 2003-20028 (2004)
[arXiv: math-ph/0401011].

\bibitem{G} T. M. Garoni: On the asymptotics of some large Hankel 
determinants generated by Fisher-Hartwig symbols defined on the real line.  
J. Math. Phys.  {\bf 46} 043516, 19 pp. (2005)

\bibitem{J} K. Johansson: On fluctuations of eigenvalues of random
hermitian matrices. Duke Math. J. {\bf 91}, 151--204 (1998)

\bi{Kr1} I. V. Krasovsky: Gap probability in the spectrum of random matrices
and asymptotics of polynomials orthogonal on an arc of the unit
circle. Int. Math. Res. Not. {\bf 2004}, 1249--1272 (2004)
[math.FA/0401258]

\bi{Kr2} I. V. Krasovsky: Correlations of the characteristic polynomials in the 
Gaussian Unitary Ensemble or a singular Hankel determinant. Duke
Math. J. {\bf 139}, 581--619 (2007) [math-ph/0411016]


\bibitem{KM} T. Kriecherbauer, K. T-R McLaughlin: Strong asymptotics 
of polynomials orthogonal with respect to Freud weights, 
Int. Math. Res. Not. {\bf1999}, 299--333 (1999)

\bi{KV}  A. B. J. Kuijlaars, M. Vanlessen: Universality for eigenvalue
correlations at the origin of the spectrum.
Commun. Math. Phys. {\bf 243}, 163--191 (2003)

\bi{KVA} A. B. J. Kuijlaars, K. T-R McLaughlin, W. Van Assche,
M. Vanlessen: The Riemann-Hilbert approach to strong asymptotics for
orthogonal polynomials on $[-1,1]$.
Adv. Math. {\bf 188}, 337--398 (2004) [math.CA/011125]

\bi{L} A. Lenard: Momentum distribution in the ground state of 
the one-dimensional system of impenetrable bosons. J. Math. Phys. {\bf 5} 930--943
(1964);
A. Lenard: Some remarks on large Toeplitz determinants. 
Pacific J. Math. {\bf 42}, 137--145 (1972)

\bi{Magnus} A. P. Magnus: Asymptotics for the simplest generalized 
Jacobi polynomials recurrence coefficients from Freud's equations: numerical 
explorations. Special functions (Torino, 1993).  
Ann. Numer. Math.  {\bf 2}, 311--325 (1995)



\bibitem{Olver} F. W. J. Olver, Asymptotics and Special Functions,
Academic Press, INC. (1974) 

\bibitem{Sz} G. Szeg\H o: Orthogonal polynomials. AMS Colloquium
  Publ. {\bf 23}. New York: AMS 1959


\bi{V} M. Vanlessen: Strong asymptotics of the recurrence coefficients
of orthogonal polynomials associated to the generalized Jacobi weight.
J. Approx. Theory {\bf 125}, 198--237 (2003)


\bibitem{Warc} H. Widom: The strong Szeg\H o limit theorem for
  circular arcs.  Indiana Univ. Math. J. {\bf 21}, 277--283 (1971)

\bibitem{W} H. Widom: Toeplitz determinants with singular generating 
functions.
Amer. J. Math. {\bf 95} 333--383 (1973)



 
\end{thebibliography}
\end{document}